\numberwithin{equation}{section}
\renewcommand{\subsection}{\@startsection
{subsection}{2}{0mm}{\baselineskip}{-0.25cm}
{\normalfont\normalsize\bf}}
\newtheorem{theorem}{Theorem}[section]
\newtheorem{lemma}[theorem]{Lemma}
\newtheorem{corollary}[theorem]{Corollary}
\newtheorem*{theorem*}{Theorem}
\newtheorem{definition}[theorem]{Definition}   
\newtheorem{remark}[theorem]{Remark}
\begin{document}

\author[D. Bartoli]{Daniele Bartoli}
\address{ Dipartimento di Matematica e Informatica, Universit\`a degli Studi di Perugia, Perugia, Italy} \email{daniele.bartoli@unipg.it}

\author[M. Montanucci]{Maria Montanucci}
\address{ Department of Applied Mathematics and Computer Science, Technical University of Denmark, Kongens Lyngby, Denmark} \email{marimo@dtu.dk}

\author[Giovanni Zini]{Giovanni Zini}
\address{ Dipartimento di Matematica e Fisica, Universit\`a degli Studi della Campania Luigi Vanvitelli, Caserta, Italy} \email{giovanni.zini@unicampania.it}

\title[On certain self-orthogonal AG codes]{On certain self-orthogonal AG codes with applications to Quantum error-correcting codes}

\thanks{{\em 2010 Math. Subj. Class.}: 94B27, 11T71, 81P70, 14G50}

\thanks{{\em Keywords}: Finite fields, algebraic geometry codes, quantum error-correction, algebraic curves}

\begin{abstract}
In this paper a construction of quantum codes from self-orthogonal algebraic geometry codes is provided. Our method is based on the CSS construction as well as on some peculiar properties of the underlying algebraic curves, named Swiss curves. Several classes of well-known algebraic curves with many rational points turn out to be Swiss curves. Examples are given by Castle curves, GK curves, generalized GK curves and the Abd\'on-Bezerra-Quoos maximal curves. Applications of our method to these curves are provided. Our construction extends a previous one due to Hernando, McGuire, Monserrat, and Moyano-Fern\'andez.
\end{abstract}

\maketitle

\section{Introduction}

Since the discovery of quantum algorithms, such as a polynomial time algorithm for factorization by Shor \cite{Shor} and a quantum search algorithm by Grover \cite{Grover}, quantum computing has received a lot of attention. Even though a concrete and practical implementation of these algorithms is far away, it has nonetheless become clear that some form of error correction is required to protect quantum data from noise. This was the motivation for the development of quantum computation and, more specifically, of quantum error-correcting codes.

In the last decades much research has been done to find good quantum codes following several strategies and underlying mathematical structures. However, the most remarkable result is probably the one obtained by Calderbank and Shor \cite{CSH},  and Steane \cite{STE}; see also \cite{10}. Indeed they  showed  that quantum codes can be derived from classical linear error-correcting codes provided that certain orthogonality properties are satisfied, including Euclidean and Hermitian self-orthogonality; see \cite{10,28,NC}. This method, known as CSS construction,  has allowed to find many powerful quantum stabilizer codes.

Among all the classical codes used to produce quantum stabilizer codes, Algebraic-Geometry (AG) codes \cite{G1982} have received considerable attention \cite{MST, MTT, LGP, SSSSSOCODESSS, BMZ, MTZ,  MPL, CHe, GAHE, c1,c2,c3, c4,d1,d2}.
The interest towards AG codes is due to several reasons.
First, every linear code can be realized as an algebraic geometry code\cite{PE}. Also, AG codes were indeed used to improve the Gilbert-Varshamov bound \cite{GV}, an outstanding result at that time. Finally, conditions for Euclidean self-orthogonality of AG codes are well known \cite{BO} and allow us to translate the pure combinatorial nature of this problem into geometrical terms concerning the structure of the curves involved and their corresponding function fields.  

Castle curves and AG codes from them \cite{MST} give rise to good quantum error-correcting codes. Indeed, among all curves used to get AG codes, Castle and weak Castle curves combine the good properties of having a reasonable simple handling and giving codes with excellent parameters. This is confirmed by the fact that most of the best one-point AG codes studied in the literature belong to the family of Castle codes.

In \cite{MTT}, Munuera, Ten\'orio and Torres used the good properties of algebraic-geometry codes coming from Castle and weak Castle curves to provide new sequences of self-orthogonal codes. Their construction was extended in \cite{SSSSSOCODESSS} by Hernando, McGuire, Monserrat, and Moyano-Fern\'andez, who provided a way to obtain self-orthogonal AG codes, and hence good quantum codes, from a more general class of curves, strictly including Castle curves.
In this paper we further generalize the family of curves considered  in \cite{SSSSSOCODESSS} 
to what we call Swiss curves. The geometric properties on the underlying plane curves considered 
in \cite{SSSSSOCODESSS}  are weakened, focusing on the algebraic structure of the curves, that is, on their function field. The family of Swiss curves, and more generally of $r$-Swiss curves, includes the most studied and known families of algebraic curves with many rational points over finite fields. Some example are given by the Giulietti-Korchm\'aros curve \cite{GK2009}, the two generalized Giulietti-Korchm\'aros curves \cite{GGS2010} and \cite{BM}, as well as the Abd\'on-Bezerra-Quoos curve \cite{ABQ}.
Explicit constructions of quantum codes from these curves are provided, as well as comparisons with   the quantum Gilbert-Varshamov bound.
 
The paper is organized as follows.
Section \ref{sec:preliminaries} recalls basic notions on AG codes and quantum codes; in particular, we present some constructions from the literature where quantum codes are obtained from AG codes with self-orthogonality properties.
Section \ref{sec:swiss} defines a class of curves, namely Swiss curves, for which we prove in Theorem \ref{Main:Swiss} a result about self-orthogonality properties.
This is applied in Section \ref{sec:appl} to several curves which are shown to be Swiss and which provide quantum codes.
The results of Section \ref{sec:swiss} are generalized in Section \ref{sec:r-swiss} to a larger class of curves, called $r$-Swiss curves, and then applied in Section \ref{sec:r-appl} to generalized GK curves over finite fields of even order.
Finally, we note in Section \ref{sec:comp} that certain stabilizer quantum codes constructed in the previous sections are pure and exceed the quantum Gilbert-Varshamov bound.

\section{AG codes and quantum codes}\label{sec:preliminaries}

\subsection{AG codes}

We introduce here some basic notions on AG codes; for a detailed introduction to this topic, we refer to \cite[Chapter 2]{Sti}.

Let $\mathbb{F}_q$ be the finite field of order $q$ and $\mathcal{X}$ be a projective, absolutely irreducible, algebraic curve of genus $g$ defined over $\mathbb{F}_q$.
Let $\mathbb{F}_q(\mathcal{X})$ be the field of rational functions on $\mathcal{X}$ and $\mathcal{X}(\mathbb{F}_q)$ be the set of rational places of $\mathcal{X}$.
For any divisor $D=\sum_{P\in\mathcal{X}(\overline{\mathbb{F}}_q)}n_P P$ on $\mathcal{X}$, we denote by $v_P(D)$ the weight $n_P\in\mathbb{Z}$ of $P$ in $D$ (also called the valuation of $D$ at $P$), and by ${\rm supp}(D)$ the support of $D$, that is the finite set of places with non-zero weight in $D$; the degree of $D$ is $\deg(D)=\sum_{P\in{\rm supp}(D)} n_P$.
The Riemann-Roch space $\mathcal{L}(D)$ of an $\mathbb{F}_q$-rational divisor $D$ is the finite dimensional $\mathbb{F}_q$-vector space 
$$ \mathcal{L}(D)=\{f\in\mathbb{F}_q(\mathcal{X})\setminus\{0\}\colon (f)+D\geq0\}\cup\{0\},$$
 where $(f)=(f)_0-(f)_{\infty}$ denotes the principal divisor of $f$; here, $(f)_0$ and $(f)_\infty$ are respectively the zero divisor and the pole divisor of $f$.
The $\mathbb{F}_q$-dimension of $\mathcal{L}(D)$ is denoted by $\ell(D)$.

Let $\{P_1,\ldots,P_N\}\subseteq\mathcal{X}(\mathbb{F}_q)$ with $P_i\ne P_j$ for $i\ne j$, $D$ be the $\mathbb{F}_q$-rational divisor $P_1+\cdots+P_N$, and $G$ be an $\mathbb{F}_q$-rational divisor of $\mathcal{X}$ such that ${\rm supp}(D)\cap{\rm supp}(G)=\emptyset$.
Consider the $\mathbb{F}_q$-linear evaluation map 
\begin{eqnarray*}
e_D:&\mathcal{L}(G)&\to\mathbb{F}_q^N\\
&f&\mapsto(f(P_1),\ldots,f(P_N)).
\end{eqnarray*}

The (functional) AG code $C(D,G)$ is defined as the image $e_D(\mathcal{L}(G))$ of $e_D$.
The code $C(D,G)$ has parameters $[N,k,d]_q$ which satisfy $k=\ell(G)-\ell(G-D)$ and $d\geq N-\deg(G)$.
If $\deg(G)<N$, then $e_D$ is injective and $k=\ell(G)$.
If $2g-2<\deg(G)<N$, then $k=\deg(G)+1-g$.

The (Euclidean) dual code $C(D,G)^\bot$ has parameters $[N^\bot,k^\bot,d^\bot]_q$, where $N^\bot=N$, $k^\bot=N-k$, and $d^\bot\geq \deg(G)-2g+2$. Note that, if $2g-2<\deg(G)<N$, then $k^\bot=N-\deg(G)+g-1$.

\subsection{Quantum codes}

The main ingredient to construct quantum codes in this paper is the so-called {\it CSS construction} (named after Calderbank, Shor and Steane) which enables to construct quantum codes from classical linear codes; see \cite[Lemma 2.5]{LGP}.

A $q$-ary quantum code $Q$ of length $N$ and dimension $k$ is defined to be a $q^k$-dimensional Hilbert subspace of a $q^N$-dimensional Hilbert space $\mathbb H=(\mathbb C^q)^{\otimes n}=\mathbb C^q\otimes\cdots\otimes\mathbb C^q$. If $Q$ has minimum distance $D$, then $Q$ can correct up to $\lfloor\frac{D-1}{2}\rfloor$ quantum errors.
The notation $[[N,k,D]]_q$ is used to denote such a quantum code $Q$.
For an $[[N,k,D]]_q$-quantum code the quantum Singleton bound holds, that is, the minimum distance satisfies $D\leq 1+(N-k)/2$.
The quantum Singleton defect is $\delta^Q:=N-k-2D+2\geq0$, and the relative quantum Singleton defect is $\Delta^Q:=\delta^Q/N$.
If $\delta^Q=0$, then the code is said to be quantum MDS.
For a detailed introduction on quantum codes see \cite{LGP} and the references therein.

Another important bound for quantum codes is an analogue of the Gilbert-Varshamov bound. 
\begin{theorem}{\rm \cite[Theorem 1.4]{FM2004}}
Suppose that $N>k\geq2$, $d\geq 2$, and $N\equiv k \pmod 2$. Then there exists a pure stabilizer quantum code with parameters $[[N,k,d]]_q$ provided that 
\begin{equation}\label{Dis:GV}
\frac{q^{N-k+2}-1}{q^2-1}>\sum_{i=1}^{d-1}(q^2-1)^{i-1}\binom{N}{i}.
\end{equation}
\end{theorem}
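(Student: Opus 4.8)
The plan is to reduce the statement to a classical coding-theoretic existence question and then to settle it by a Gilbert--Varshamov type counting argument.

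\emph{Reduction.} First I would invoke the standard dictionary between stabilizer codes and classical codes underlying the CSS construction: a pure $[[N,k,d]]_q$ stabilizer code exists as soon as there is a classical $\mathbb{F}_{q^2}$-linear code $C$ of length $N$, self-orthogonal with respect to the Hermitian inner product, of $\mathbb{F}_{q^2}$-dimension $(N-k)/2$ — here the hypothesis $N\equiv k\pmod 2$ is exactly what makes this number an integer — whose Hermitian dual $C^{\perp}$, of the complementary dimension $(N+k)/2$, has minimum weight at least $d$. (Equivalently, one needs a Hermitian dual-containing code $D=C^{\perp}$ of dimension $(N+k)/2$ whose minimum distance is at least $d$.) So the theorem amounts to showing that, under hypothesis \eqref{Dis:GV}, such a self-orthogonal code exists.

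\emph{Counting.} Call a nonzero vector \emph{bad} if its weight is less than $d$; the bad vectors span exactly $\sum_{i=1}^{d-1}\binom{N}{i}(q^2-1)^{i-1}$ one-dimensional subspaces, which is the right-hand side of \eqref{Dis:GV}, since a class of weight $i$ accounts for $q^2-1$ bad vectors. A self-orthogonal $C$ of the prescribed dimension fails to produce the desired quantum code precisely when $C^{\perp}$ meets the set of bad vectors, i.e. when $C$ is contained in the hyperplane orthogonal to some bad vector. I would then construct $C$ by adjoining one isotropic generator at a time, always taking the new generator inside the current dual so as to preserve self-orthogonality, and eliminating the surviving bad classes as the construction proceeds. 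The crucial point is that, at the decisive stage, the number of admissible generators — counted up to the scalings that affect neither self-orthogonality nor the elimination condition — is controlled by the $q$-analogue enumeration of totally isotropic subspaces, and equals $\tfrac{q^{N-k+2}-1}{q^2-1}$, the left-hand side of \eqref{Dis:GV}. Since by hypothesis this strictly exceeds the number of bad classes still to be eliminated, an admissible choice always remains, and the construction terminates with a code of the required type.

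\emph{Main obstacle.} I expect the delicate part to be exactly this combinatorics: extracting the sharp value $\tfrac{q^{N-k+2}-1}{q^2-1}$ and not merely a cruder estimate such as $q^{N-k}$. This requires a careful count of the totally isotropic subspaces of a Hermitian space and of its hyperplanes, a separate treatment of isotropic and anisotropic bad vectors (the hyperplane orthogonal to an isotropic vector being degenerate), and — depending on how the greedy construction is organized — either amortizing the eliminations over all steps or replacing a crude union bound by an inclusion--exclusion over the bad classes. The remaining ingredients, namely the stabilizer-to-code dictionary and the arithmetic of the parity hypothesis, are routine.
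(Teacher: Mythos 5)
First, a point of comparison: the paper does not prove this statement at all --- it is quoted from Feng and Ma \cite[Theorem 1.4]{FM2004} and used as a benchmark in Section \ref{sec:comp} --- so there is no internal proof to measure your argument against; it has to be judged on its own merits and against the known Feng--Ma argument.

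Judged that way, there is a genuine gap exactly at the step you yourself flag as delicate, and your choice of reduction makes it worse rather than better. A stabilizer $[[N,k,d]]_q$ code corresponds to an additive ($\mathbb{F}_q$-linear) code in $\mathbb{F}_{q^2}^N$ of $\mathbb{F}_q$-dimension $N-k$ that is self-orthogonal for the trace-symplectic form; demanding an $\mathbb{F}_{q^2}$-\emph{linear} Hermitian self-orthogonal code of dimension $(N-k)/2$ is a strictly stronger requirement (which is why the theorem needs only the parity hypothesis $N\equiv k\pmod 2$, not linearity). Feng and Ma work in the symplectic/additive setting precisely because there the symplectic group is transitive on \emph{all} nonzero vectors, so the number of $(N-k)$-dimensional self-orthogonal subspaces $C$ with a fixed nonzero vector $u$ in the dual $C^{\perp_s}$ is independent of $u$; a global double count of pairs $(u,C)$ with $u$ of weight less than $d$ then produces the exact ratio $\frac{q^{N-k+2}-1}{q^2-1}$, and the theorem follows by averaging over all such subspaces --- not by a greedy construction. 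In your Hermitian-linear setting the unitary group is not transitive on nonzero vectors (isotropic versus anisotropic), the counts of totally isotropic $j$-subspaces contained in $u^{\perp}$ genuinely differ in the two cases, and the central identity you assert (``the number of admissible generators \dots equals $\frac{q^{N-k+2}-1}{q^2-1}$'') is exactly what is not proved and cannot simply be taken for granted. In addition, the one-generator-at-a-time scheme as described does not close: at the last step you would need a single isotropic vector in the current dual that is simultaneously non-orthogonal to every surviving bad vector, and a union bound at that stage does not yield the stated inequality; the ``amortizing'' or ``inclusion--exclusion'' you mention is the missing content, not a routine finish. So your reduction is a valid sufficient condition, but the combinatorial heart of the theorem is absent, and the route chosen is not the one that delivers the quoted sharp bound.
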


\begin{lemma}{\rm \cite{10,28,NC}} \label{ccs} {\rm (CSS construction)}
Let $C_1$ and $C_2$ denote two linear codes with parameters $[N,k_i,d_i]_q$, $i=1,2$, and assume that $C_1 \subset C_2$. Then there exists an $[[N,k_2-k_1,D]]_q$ code with $D=\min\{wt(c) \mid c \in (C_2 \setminus C_1) \cup (C_1^\perp \setminus C_2^\perp)\}$, where $wt(c)$ is the Hamming weight of $c$.
\end{lemma}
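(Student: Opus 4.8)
The plan is to exhibit the required quantum code as a stabilizer code and to read off its parameters from an associated symplectic self-orthogonal code over $\mathbb{F}_q$. I will invoke the standard dictionary (see \cite{10,28,NC} and the references therein) between $q$-ary stabilizer codes of length $N$ and additive subgroups $S\subseteq\mathbb{F}_q^{2N}$ that are self-orthogonal for the symplectic form $\langle(a\mid b),(a'\mid b')\rangle_s=a\cdot b'-a'\cdot b$: such an $S$ with $|S|=q^{N-k}$ yields an $[[N,k,D]]_q$ code whose minimum distance is the least symplectic weight of the vectors of $S^{\perp_s}\setminus S$ (the symplectic weight of $(a\mid b)$ being the number of coordinates $i$ with $(a_i,b_i)\neq(0,0)$), at least when $k>0$; equivalently one may phrase everything in terms of trace-Hermitian self-orthogonal codes over $\mathbb{F}_{q^2}$. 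It therefore suffices to build the right $S$ from $C_1$ and $C_2$.

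First I would set $S=\{(a\mid b)\colon a\in C_1,\ b\in C_2^{\perp}\}$. Since $C_1\subseteq C_2$, every $a\in C_1$ is Euclidean-orthogonal to every $b\in C_2^{\perp}$, so $\langle(a\mid b),(a'\mid b')\rangle_s=a\cdot b'-a'\cdot b=0$ for all $(a\mid b),(a'\mid b')\in S$; hence $S$ is symplectic self-orthogonal. Moreover $|S|=|C_1|\,|C_2^{\perp}|=q^{k_1}q^{N-k_2}=q^{N-(k_2-k_1)}$, so the corresponding stabilizer code has dimension $k_2-k_1$. A short computation then gives $S^{\perp_s}=\{(a\mid b)\colon a\in(C_2^{\perp})^{\perp}=C_2,\ b\in C_1^{\perp}\}$, so that $S^{\perp_s}\setminus S$ consists exactly of the pairs $(a\mid b)$ with $a\in C_2$, $b\in C_1^{\perp}$ and $(a,b)\notin C_1\times C_2^{\perp}$.

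The minimum distance is the least symplectic weight over this last set, and the key observation is that the symplectic weight of $(a\mid b)$ is at least $\max\{wt(a),wt(b)\}$. Consequently, if $a\in C_2\setminus C_1$ then $(a\mid 0)$ already lies in $S^{\perp_s}\setminus S$ and realizes weight $wt(a)$, with no pair sharing that first component doing better; symmetrically, if $b\in C_1^{\perp}\setminus C_2^{\perp}$ then $(0\mid b)$ realizes weight $wt(b)$. Since every element of $S^{\perp_s}\setminus S$ has its first component in $C_2\setminus C_1$ or its second component in $C_1^{\perp}\setminus C_2^{\perp}$, we conclude $D=\min\{wt(c)\colon c\in(C_2\setminus C_1)\cup(C_1^{\perp}\setminus C_2^{\perp})\}$, which is the claim.

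The checks of self-orthogonality, of the cardinality, and of the symplectic dual are routine; the only genuinely delicate point is stating the stabilizer-code dictionary correctly — in particular tracking the scalar phases in the $N$-fold Pauli group when $q$ is not a prime, and observing that the case $C_1=C_2$ is excluded (there $k=0$ and the right-hand side of the distance formula is an empty minimum). An alternative, self-contained route avoids the dictionary: work directly with the CSS basis states $|\psi_c\rangle=\frac{1}{\sqrt{|C_1|}}\sum_{x\in C_1}|c+x\rangle$ for $c\in C_2$, compute the dimension as $[C_2:C_1]=q^{k_2-k_1}$, and verify the Knill--Laflamme error-correction conditions to recover the same minimum distance; I would pick whichever presentation best fits the surrounding exposition.
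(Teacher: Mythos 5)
Your proof is correct, and since the paper states Lemma \ref{ccs} without proof (quoting it from \cite{10,28,NC}), the right comparison is with those sources: your construction of the symplectic self-orthogonal code $S=C_1\times C_2^{\perp}$, the identification $S^{\perp_s}=C_2\times C_1^{\perp}$, and the weight argument showing the minimum symplectic weight of $S^{\perp_s}\setminus S$ equals $\min\{wt(c)\mid c\in(C_2\setminus C_1)\cup(C_1^{\perp}\setminus C_2^{\perp})\}$ is exactly the standard argument given there (notably in \cite{28}). The only point you delegate --- the stabilizer dictionary for non-prime $q$ --- is unproblematic, since for $\mathbb{F}_q$-linear $S$ the trace-symplectic and symplectic duals coincide, which is precisely what the cited references verify.
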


A stabilizer quantum code $C$ is pure if the minimum distance of $C^\bot$ coincides with the minimum Hamming weight of $C^{\bot}\setminus C$.

\begin{theorem} {\rm \cite{10,28}} \label{th:stab}
Let $C$ be an $[N,k,d]_q$-code such that $C\subseteq C^{\bot}$, i.e. $C$ is self-orthogonal. Then there exists an $[[N,N-2k,\geq d^{\bot}]]_q$ stabilizer quantum code, where $d^{\bot}$ denotes the minimum distance of $C^\bot$. If the minimum weight of $C^{\bot}\setminus C$ is equal to $d^{\bot}$, then the stabilizer code is pure and has minimum distance $d^{\bot}$.
\end{theorem}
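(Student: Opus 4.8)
The plan is to obtain Theorem~\ref{th:stab} as an immediate specialization of the CSS construction in Lemma~\ref{ccs}. Concretely, I would apply that lemma to the pair of classical codes $C_1 := C$ and $C_2 := C^{\bot}$. The hypothesis $C_1 \subseteq C_2$ required by Lemma~\ref{ccs} is exactly the self-orthogonality assumption $C \subseteq C^{\bot}$, so the lemma applies verbatim.

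Next I would read off the parameters. Writing $k_1 = \dim C = k$ and $k_2 = \dim C^{\bot} = N-k$, Lemma~\ref{ccs} yields a quantum code of length $N$ and dimension $k_2 - k_1 = (N-k) - k = N - 2k$, as claimed. For the minimum distance, Lemma~\ref{ccs} gives
$$D = \min\{wt(c) \mid c \in (C_2 \setminus C_1) \cup (C_1^{\bot} \setminus C_2^{\bot})\}.$$
Here $C_2 \setminus C_1 = C^{\bot} \setminus C$, while $C_1^{\bot} \setminus C_2^{\bot} = C^{\bot} \setminus (C^{\bot})^{\bot} = C^{\bot} \setminus C$, since $(C^{\bot})^{\bot} = C$; hence the two sets coincide and $D$ is precisely the minimum Hamming weight of a vector in $C^{\bot} \setminus C$. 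Because $C^{\bot} \setminus C \subseteq C^{\bot} \setminus \{0\}$, this minimum is at least the minimum distance $d^{\bot}$ of $C^{\bot}$, which gives the stated parameters $[[N, N-2k, \geq d^{\bot}]]_q$.

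For the last assertion, if the minimum weight of $C^{\bot} \setminus C$ equals $d^{\bot}$, then the computation above shows $D = d^{\bot}$, so the quantum code has minimum distance exactly $d^{\bot}$; by the definition of purity recalled just before the statement, the resulting stabilizer code is then pure. I do not expect any genuine obstacle here: the only point requiring a little care is the identification of the two sets appearing in the CSS distance formula, which rests on the double-dual identity $(C^{\bot})^{\bot} = C$; everything else is a direct substitution into Lemma~\ref{ccs}.
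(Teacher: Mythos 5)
Your proposal is correct, but note that the paper does not actually prove Theorem \ref{th:stab}: it is quoted from \cite{10,28} without an internal argument, so there is no in-paper proof to match. What you supply is the natural self-contained derivation from Lemma \ref{ccs}, and it works: with $C_1=C$, $C_2=C^{\bot}$ the dimension count $k_2-k_1=N-2k$ is immediate, and your key observation that $C_1^{\bot}\setminus C_2^{\bot}=C^{\bot}\setminus(C^{\bot})^{\bot}=C^{\bot}\setminus C$ (via double duality for linear codes) correctly collapses the two sets in the CSS distance formula to $C^{\bot}\setminus C$, whose minimum weight is at least $d^{\bot}$ because $0\in C$; the purity claim then follows verbatim from the definition of purity recalled just before the theorem. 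This buys the paper a proof of the cited result from machinery it already states, rather than deferring to the stabilizer-code literature, where the result is usually obtained through the symplectic/stabilizer formalism rather than as a corollary of CSS. The only point worth a sentence is the degenerate self-dual case $C=C^{\bot}$: there $C^{\bot}\setminus C$ is empty, Lemma \ref{ccs} (stated with $C_1\subset C_2$) does not literally apply, and the theorem's conclusion is vacuous or trivial ($N-2k=0$); so your argument establishes the statement in the only case that matters for the paper, namely $C\subsetneq C^{\bot}$, and you might flag that explicitly.
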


\begin{corollary}{\rm \cite{SSSSSOCODESSS}}\label{Corollary}
Let $C$ be an $[N,k,d]_q$-code such that $C\subseteq C^\bot$. If  $d>k+1$ then there exists an $[[N,N-2k, d^{\bot}]]_q$-code which is pure.
\end{corollary}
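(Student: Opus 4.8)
The plan is to derive this as an immediate consequence of Theorem \ref{th:stab}. Since $C\subseteq C^\bot$, that theorem already yields an $[[N,N-2k,\geq d^\bot]]_q$ stabilizer code, and it upgrades this to a \emph{pure} code with exact minimum distance $d^\bot$ as soon as the minimum weight of $C^\bot\setminus C$ equals $d^\bot$. Thus the entire task reduces to verifying this single condition under the hypothesis $d>k+1$.

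First I would record the parameters of the dual code. Since $C$ is an $[N,k,d]_q$-code, its Euclidean dual $C^\bot$ is an $[N,N-k,d^\bot]_q$-code, and the Singleton bound applied to $C^\bot$ gives $d^\bot\leq N-(N-k)+1=k+1$.

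Next I would compare this with the weight distribution of $C$ itself. By assumption every nonzero codeword of $C$ has Hamming weight at least $d$, and $d>k+1\geq d^\bot$. Hence a codeword of $C^\bot$ of weight exactly $d^\bot$ cannot lie in $C$, so it belongs to $C^\bot\setminus C$; on the other hand, every element of $C^\bot\setminus C$ is a nonzero codeword of $C^\bot$ and therefore has weight at least $d^\bot$. Consequently the minimum weight of $C^\bot\setminus C$ is exactly $d^\bot$.

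Feeding this back into the purity clause of Theorem \ref{th:stab} produces an $[[N,N-2k,d^\bot]]_q$ stabilizer code which is pure, as claimed. There is no genuine obstacle in this argument: the only point to observe is that the strict inequality $d>k+1$, together with the Singleton bound $d^\bot\leq k+1$ for the dual, forces every minimum-weight codeword of $C^\bot$ to fall outside the (small) subcode $C$, which is precisely the condition for purity.
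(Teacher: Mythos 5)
Your argument is correct and follows the same route as the paper: both use the Singleton bound on $C^\bot$ to get $d^\bot\leq k+1$, observe that $d>k+1$ forces every minimum-weight word of $C^\bot$ to lie outside $C$, and then invoke the purity clause of Theorem \ref{th:stab}. You merely spell out the intermediate step (that the minimum weight of $C^\bot\setminus C$ equals $d^\bot$) which the paper leaves implicit.
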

\begin{proof}
$C^\bot$ is an $[N,N-k,d^{\bot}]_q$ code, with $d^{\bot}\leq k+1$ by the Singleton Bound. If $d>k+1$, then by Theorem \ref{th:stab} there exists a pure $[[N,N-2k, d^{\bot}]]_q$ stabilizer quantum code. 
\end{proof}

\subsection{Constructions of AG quantum codes}

We list here some constructions of quantum codes starting from AG codes which have been provided in the literature and exploit self-orthogonality properties of the underlying AG codes.

\begin{itemize}
\item 
\textit{General t-point construction} due to La Guardia and Pereira; see \cite[Theorem 3.1]{LGP}. This is a direct application of the CSS construction to AG codes.

\begin{lemma} \label{lem1} {\rm (General t-point construction)}
Let $\mathcal X$ be a nonsingular curve over $\mathbb F_q$ with genus $g$ and $N+t$ distinct $\mathbb F_q$-rational points, for some $N,t>0$.
Assume that $a_i,b_i$, $i=1,\ldots,t$, are positive integers such that $a_i \leq b_i$ for all $i$ and $2g-2 < \sum_{i=1}^{t} a_i < \sum_{i=1}^t b_i < N$. Then there exists a quantum code with parameters $[[N,k,D]]_{q}$ with $k=\sum_{i=1}^{t} b_i - \sum_{i=1}^{t} a_i$ and $D \geq \min \big\{ N - \sum_{i=1}^{t} b_i, \sum_{i=1}^{t} a_i - (2g-2)\big\}$.
\end{lemma}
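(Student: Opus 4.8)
The plan is to derive the quantum code directly from the CSS construction (Lemma \ref{ccs}) applied to a nested pair of one-point-type AG codes built from the $t$ chosen rational points. Write $\mathcal{X}(\mathbb{F}_q)=\{Q_1,\dots,Q_t\}\cup\{P_1,\dots,P_N\}$ with all points distinct, set $D=P_1+\cdots+P_N$, and consider the two divisors $G_1=\sum_{i=1}^t a_i Q_i$ and $G_2=\sum_{i=1}^t b_i Q_i$. Since $a_i\le b_i$ for all $i$ we have $G_1\le G_2$, hence $\mathcal{L}(G_1)\subseteq\mathcal{L}(G_2)$, and applying the evaluation map $e_D$ (which has support disjoint from $\mathrm{supp}(G_1)=\mathrm{supp}(G_2)=\{Q_1,\dots,Q_t\}$) gives $C_1:=C(D,G_1)\subseteq C_2:=C(D,G_2)$. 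This is the nesting hypothesis required by Lemma \ref{ccs}.

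Next I would compute the parameters of $C_1$ and $C_2$. Using the hypothesis $2g-2<\deg(G_1)=\sum a_i$ and $\deg(G_2)=\sum b_i<N$, the Riemann-Roch theorem gives $\ell(G_1)=\deg(G_1)+1-g=\sum a_i +1-g$ and $\ell(G_2)=\sum b_i+1-g$; moreover $\deg(G_j)<N$ forces $e_D$ injective, so $\dim C_j=\ell(G_j)$. Thus $C_2$ has dimension $k_2=\sum b_i+1-g$ and $C_1$ has dimension $k_1=\sum a_i+1-g$, so the quantum code produced by Lemma \ref{ccs} has dimension $k_2-k_1=\sum_{i=1}^t b_i-\sum_{i=1}^t a_i$, as claimed. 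The length is $N$ since both codes have block length $N$.

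For the minimum distance, Lemma \ref{ccs} yields $D=\min\{\mathrm{wt}(c)\mid c\in(C_2\setminus C_1)\cup(C_1^\perp\setminus C_2^\perp)\}$, and it suffices to bound this below by the minimum distances of $C_2$ and of $C_1^\perp$. A codeword in $C_2\setminus C_1$ is in particular a nonzero codeword of $C_2$, so its weight is at least $d(C_2)\ge N-\deg(G_2)=N-\sum b_i$ by the standard AG Goppa bound. Dually, $C_1^\perp$ is the dual of $C(D,G_1)$, whose minimum distance satisfies $d(C_1^\perp)\ge \deg(G_1)-2g+2=\sum a_i-(2g-2)$ (this is exactly the dual-distance estimate recorded in the AG-codes subsection), and a codeword of $C_1^\perp\setminus C_2^\perp$ is a nonzero codeword of $C_1^\perp$. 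Taking the minimum of the two lower bounds gives $D\ge\min\{N-\sum b_i,\ \sum a_i-(2g-2)\}$, completing the proof.

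The only genuinely delicate point is making sure the degree inequalities $2g-2<\sum a_i<\sum b_i<N$ are invoked correctly: the lower bound $2g-2<\sum a_i$ is what licenses the exact Riemann-Roch dimension formula (and the nontriviality $d(C_1^\perp)\ge\sum a_i-(2g-2)>0$ of the dual distance bound), while the upper bound $\sum b_i<N$ is what makes $e_D$ injective on $\mathcal{L}(G_2)$ (hence on $\mathcal{L}(G_1)$) and guarantees $d(C_2)>0$; the chain $\sum a_i<\sum b_i$ together with $G_1\le G_2$ ensures the inclusion is strict enough that the quantum dimension is positive. Everything else is a routine specialization of Lemma \ref{ccs} and the AG parameter estimates already stated, so no new machinery is needed.
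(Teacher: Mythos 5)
Your proof is correct and follows exactly the route the paper indicates: the lemma is quoted from \cite[Theorem 3.1]{LGP} with the remark that it is ``a direct application of the CSS construction to AG codes,'' and your argument (nesting $C(D,\sum a_iQ_i)\subseteq C(D,\sum b_iQ_i)$, Riemann--Roch for the dimensions, and the Goppa and dual-distance bounds for the weight estimate) is precisely that application. No gaps; the degree hypotheses are invoked where they are needed.
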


\item 
\textit{Quantum codes from weak Castle curves}, due to Munuera, Ten\'orio, and Torres; see \cite[Sections 3.3 and 3.4]{MTT}.

A weak Castle curve over $\mathbb{F}_q$ is a pair $(\mathcal{X},P)$, where $\mathcal{X}$ is an absolutely irreducible $\mathbb{F}_q$-rational curve and $P$ is a rational place of $\mathcal{X}$ such that the following conditions hold.
\begin{itemize}
\item The Weierstrass semigroup $H(P)$ at $P$ is symmetric.
\item there exist a positive integer $s$, a rational map $f:\mathcal{X}\to\mathbb{P}^1$, and a non-empty set $\{\alpha_1,\ldots,\alpha_h\}\subseteq\mathbb{F}_q$ such that $(f)_\infty= sP$ and for all $i=1,\ldots,h$ we have $f^{-1}(\alpha_i)\subseteq\mathcal{X}(\mathbb{F}_q)$ and $|f^{-1}(\alpha_i)=s|$.
\end{itemize}
With the same notation, let $\phi\in\mathbb{F}_q(\mathcal{X})$ be defined as $\phi=\prod_{i=1^h}(f-\alpha_i)$, and let $D$ be the sum of all $N=|\mathcal{X}(\mathbb{F}_q)|-1$ rational places of $\mathcal{X}$ different from $P$.
Denote by $M=\{m_1=0,m_2,\ldots,m_N\}$ the dimension set of $(\mathcal{X},P)$, i.e. $m_i=\min\{m\colon \ell(mP)-\ell((m-N)P)\geq i\}$, and by $C_i$ the weak Castle code $C(D,m_i P)$.
For any $r\geq1$ let $\gamma_r$ be the $r$-th gonality of $\mathcal{X}$, that is the minimum degree of a divisor $A$ on $\mathcal{X}$ such that $\ell(A)\geq r$.
\begin{lemma}{\rm \cite[Corollary 5]{MTT}}
Using the same notation as above, let $(\mathcal{X},P)$ be a weak Castle curve of genus $g$ over $\mathbb{F}_{q^2}$ such that $(d\phi)=(2g-2)P$.
If $(q+1)m_i\leq N+2g-2$ for some $i$, then there exists a quantum code with parameters $[[N,N-2i,\geq d(C_{n-i})]]_q$ with $d(C_{n-i})\geq N-m_{N-i}+\gamma_{a+1}$, where $a=\ell((m_{N-i}-N)P)$.
\end{lemma}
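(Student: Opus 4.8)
The plan is to realize the quantum code as a stabilizer code attached, via the Hermitian inner product, to the one‑point weak Castle code $C_i=C(D,m_iP)$. Since $m_i$ is the $i$‑th element of the dimension set $M$, the code $C_i$ is an $[N,i,d(C_i)]_{q^2}$ code, and more generally $C_j=C(D,m_jP)$ is an $[N,j,d(C_j)]_{q^2}$ code; these are the objects I would work with throughout, the target being a chain of the form $C_i\subseteq C_i^{\bot_H}$ together with a good estimate for $d(C_i^{\bot_H})$.

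First I would extract from the weak Castle structure, together with the hypothesis $(d\phi)=(2g-2)P$, the two facts that drive the duality: the linear equivalence $D\sim NP$ (so that $m_{N-i}P-D\sim(m_{N-i}-N)P$ and $\ell(m_{N-i}P-D)=\ell((m_{N-i}-N)P)=a$), and a differential $\eta$ with $(\eta)=(N+2g-2)P-D$ having a simple pole of residue $1$ at every point of $\mathrm{supp}(D)$. Both come from $\eta=d\phi/\phi$ once $(\phi)=D-NP$ and $(d\phi)=(2g-2)P$ (the fibres of $f$ being reduced accounts for the residues), the symmetry of $H(P)$ being precisely what makes $(2g-2)P$ a canonical divisor. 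The usual residue argument then yields $C(D,G)^{\bot}=C\bigl(D,(N+2g-2)P-G\bigr)$ for every $G$ supported at $P$; in particular $C_i^{\bot}=C\bigl(D,(N+2g-2-m_i)P\bigr)$, and since this and $C(D,m_{N-i}P)$ are one‑point codes of the same $\mathbb{F}_{q^2}$‑dimension $N-i$ they coincide, i.e. $C_i^{\bot}=C_{N-i}$.

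Next I would prove the Hermitian self‑orthogonality. Writing $C_i^{(q)}$ for the coordinatewise $q$‑th power of $C_i$, one has $C_i^{(q)}=e_D\bigl(\{f^{q}:f\in\mathcal{L}(m_iP)\}\bigr)$ with each $f^{q}\in\mathcal{L}(qm_iP)$; the hypothesis $(q+1)m_i\le N+2g-2$ reads $qm_i\le N+2g-2-m_i$, hence $\mathcal{L}(qm_iP)\subseteq\mathcal{L}\bigl((N+2g-2-m_i)P\bigr)$ and so $C_i^{(q)}\subseteq C\bigl(D,(N+2g-2-m_i)P\bigr)=C_i^{\bot}$. As $y\in C_i^{\bot_H}$ if and only if $y^{(q)}\in C_i^{\bot}$ (immediate from $\langle x,y\rangle_H=\sum x_jy_j^q$), this says exactly $C_i\subseteq C_i^{\bot_H}$. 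Applying the $q$‑ary stabilizer construction for Hermitian self‑orthogonal $\mathbb{F}_{q^2}$‑linear codes (the Hermitian counterpart of Theorem~\ref{th:stab}), with $\dim_{\mathbb{F}_{q^2}}C_i=i$, then produces a quantum code $[[N,N-2i,\ge d(C_i^{\bot_H})]]_q$; and since $y\mapsto y^{(q)}$ is a weight‑preserving bijection $C_i^{\bot_H}\to C_i^{\bot}=C_{N-i}$, one gets $d(C_i^{\bot_H})=d(C_{N-i})$, so the code is $[[N,N-2i,\ge d(C_{N-i})]]_q$.

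Finally I would bound $d(C_{N-i})$ by the gonality‑refined Goppa bound. Let $0\ne c=e_D(h)\in C_{N-i}$ have weight $w$; then $h\in\mathcal{L}(m_{N-i}P)$ vanishes at exactly $N-w$ of the points $P_1,\dots,P_N$, so, writing $T$ for their sum, $h\in\mathcal{L}(m_{N-i}P-T)\setminus\mathcal{L}(m_{N-i}P-D)$, whence $\ell(m_{N-i}P-T)\ge\ell(m_{N-i}P-D)+1=a+1$. By the definition of the $(a+1)$‑th gonality, $\deg(m_{N-i}P-T)=m_{N-i}-(N-w)\ge\gamma_{a+1}$, i.e. $w\ge N-m_{N-i}+\gamma_{a+1}$, so $d(C_{N-i})\ge N-m_{N-i}+\gamma_{a+1}$, as claimed. \textbf{The main obstacle} is the first step: teasing out of the weak Castle hypotheses and $(d\phi)=(2g-2)P$ both the equivalence $D\sim NP$ and a duality differential supported, away from $D$, only at $P$, and thereby identifying $C_i^{\bot}$ with the one‑point code $C_{N-i}$. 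Once that identification is in place, the self‑orthogonality, the passage to a $q$‑ary stabilizer code, and the distance estimate are all essentially formal.
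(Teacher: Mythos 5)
Your proposal is correct, but note that the paper itself contains no proof of this statement: it is quoted verbatim from \cite[Corollary 5]{MTT}, so there is no in-paper argument to compare against. Your reconstruction follows essentially the same route as the cited source: the duality $C_i^{\bot}=C(D,(N+2g-2-m_i)P)=C_{N-i}$ via the residue-one differential $d\phi/\phi$ together with $(\phi)=D-NP$, Hermitian self-orthogonality of $C_i$ from $(q+1)m_i\leq N+2g-2$, the Hermitian stabilizer construction over $\mathbb{F}_{q^2}$ yielding the $[[N,N-2i]]_q$ parameters, and the gonality-refined Goppa bound giving $d(C_{N-i})\geq N-m_{N-i}+\gamma_{a+1}$; the only step stated tersely (equality rather than mere inclusion of $C_{N-i}$ in $C_i^{\bot}$) is justified exactly as you indicate, by nesting of one-point codes plus equal dimensions.
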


\item \textit{Self-orthogonal AG codes from curves with only one place at infinity}, due to Hernando, McGuire, Monserrat, and Moyano-Fern\'andez; see \cite[Section 3]{SSSSSOCODESSS}.

Let $\mathcal{X}$ be an absolutely irreducible $\mathbb{F}_q$-rational plane curve with $\mathbb{F}_q(\mathcal{X})=\mathbb{F}_q(x,y)$ such that $\mathcal{X}$ has only one point $\mathcal{P}_{\infty}$ at infinity, there is only one place $P_\infty$ centered at $\mathcal{P}_{\infty}$, and $P_{\infty}$ is rational.
Let $\mathcal{A}$ be the set of the elements $a\in\mathbb{F}_q$ such that $\mathcal{X}$ and the line $L_a$ with affine equation $X=a$ are $\mathbb{F}_q$-transversal, that is, the points of $\mathcal{X}\cap L_a$ are $\mathbb{F}_q$-rational and the intersection multiplicity of $\mathcal{X}$ and $L_a$ is $1$ at every point of $\mathcal{X}\cap L_a$.
Let $\mathcal{P}_{\mathcal{A}}$ be the set of places of $\mathcal{X}$ centered at affine points of $\mathcal{X}$ whose $X$-coordinate is in $\mathcal{A}$, and $D$ be the divisor $\sum_{P\in\mathcal{P}_{\mathcal A}}P$.
Define the rational functions $f_{\mathcal A}(x)=\prod_{a\in\mathcal{A}}(x-a)$ and $f_{\mathcal A}^{\prime}(x)$, where $f_{\mathcal A}^\prime (X)=\partial_X f_{\mathcal A}(X)$.
Let $M$ be the divisor of $\mathcal{X}$ such that ${\rm supp}(M)=\{P\in{\rm supp}((f_{\mathcal A}^\prime)_0)\colon P\ne P_{\infty}\}$ and $v_Q(M)=v_Q((f_{\mathcal A}^\prime)_0)$ for every $Q\in{\rm supp}(M)$.
\begin{lemma}{\rm \cite[Theorem 3.1]{SSSSSOCODESSS}}
Using the same notation as above, let $G$ be an $\mathbb{F}_q$-rational divisor of $\mathcal{X}$ with ${\rm supp}(G)\cap{\rm supp}(D)=\emptyset$. Then
$$ C(D,G)^\bot = C(D,(2g-2+\deg(D)-\deg(M))P_{\infty}+M-G) .$$
If in addition $2G\leq(2g-2+\deg(D)-\deg(M))P_{\infty}+M$, then
$$ C(D,G)\subseteq C(D,G)^\bot.$$
\end{lemma}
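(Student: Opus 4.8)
The plan is to realise the dual of $C(D,G)$ through the standard description of the dual of a functional AG code: if $\omega$ is a Weil differential of $\mathcal X$ with $v_P(\omega)=-1$ and ${\rm res}_P(\omega)=1$ at every $P\in{\rm supp}(D)$, then $C(D,G)^\bot=C(D,\,D-G+(\omega))$ (see e.g.\ \cite[Chapter~2]{Sti}). It therefore suffices to exhibit such an $\omega$ and to compute its divisor $(\omega)$. The natural candidate is the logarithmic differential $\omega=df_{\mathcal A}/f_{\mathcal A}=f_{\mathcal A}'(x)\,dx/f_{\mathcal A}(x)$.

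First I would pin down $f_{\mathcal A}$ as a function on $\mathcal X$. Fix $a\in\mathcal A$: by $\mathbb F_q$-transversality of $\mathcal X$ with $L_a$, every place of $\mathcal X$ with $X$-coordinate $a$ is rational and satisfies $v_P(x-a)=1$, while the remaining factors of $f_{\mathcal A}$ are units at such $P$. Hence $f_{\mathcal A}$ has a simple zero at each place of $\mathcal P_{\mathcal A}$ and no other affine zero, and since $x$ has poles only at $P_\infty$ we obtain $(f_{\mathcal A})=D-\deg(D)P_\infty$. Because $f_{\mathcal A}$ is separable, $f_{\mathcal A}$ and $f_{\mathcal A}'(x)$ have no common zero, so at every $P\in{\rm supp}(D)$ the differential $\omega$ has exactly a simple pole with residue $v_P(f_{\mathcal A})=1$, as needed; the same remark shows ${\rm supp}(M)\cap{\rm supp}(D)=\emptyset$, so the divisor appearing on the right-hand side of the asserted identity is supported away from ${\rm supp}(D)$ and $C(D,\,\cdot\,)$ is defined on it.

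Next I would compute $(\omega)=(df_{\mathcal A})-(f_{\mathcal A})=(f_{\mathcal A}'(x))+(dx)-(f_{\mathcal A})$. Since $f_{\mathcal A}'(x)$ is a polynomial in $x$, it has poles only at $P_\infty$ and its affine zero divisor is, by definition, exactly $M$; hence $(f_{\mathcal A}'(x))=M-\deg(M)P_\infty$. For the term $(dx)$ one uses the structural hypotheses on $\mathcal X$ to conclude that $x\colon\mathcal X\to\mathbb P^1$ is unramified over $\mathbb A^1$, i.e.\ all ramification of $x$ lies over $\infty$; then $dx$ has neither zero nor pole at any affine place, so $(dx)=(2g-2)P_\infty$ by the degree count $\deg((dx))=2g-2$. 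Putting the pieces together, $(\omega)=M+(2g-2-\deg(M))P_\infty-D+\deg(D)P_\infty$, whence $D-G+(\omega)=(2g-2+\deg(D)-\deg(M))P_\infty+M-G$, which is the first assertion. The second assertion is now immediate: if $A_1\le A_2$ with both supports disjoint from ${\rm supp}(D)$, then $\mathcal L(A_1)\subseteq\mathcal L(A_2)$ and the two evaluation maps are compatible, so $C(D,A_1)\subseteq C(D,A_2)$; taking $A_1=G$ and $A_2=(2g-2+\deg(D)-\deg(M))P_\infty+M-G$, the hypothesis $2G\le(2g-2+\deg(D)-\deg(M))P_\infty+M$ is exactly $A_1\le A_2$, so $C(D,G)\subseteq C(D,G)^\bot$.

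Everything apart from one step is routine bookkeeping with principal divisors and the elementary behaviour of logarithmic differentials. The step that genuinely uses the geometry of $\mathcal X$ — and which I expect to be the main point to justify carefully — is the identity $(dx)=(2g-2)P_\infty$, equivalently that $x$ has no ramification at affine places; this is what makes the affine ramification of $x$ disappear from the computation and lets the dual divisor be expressed purely through $P_\infty$ and $M$. A second, smaller subtlety is that $\omega$ must have residue \emph{exactly} $1$ (not merely a pole of order $1$) at each place of ${\rm supp}(D)$: this is what pins the dual divisor down on the nose rather than up to linear equivalence, and it is the reason for normalising $df_{\mathcal A}$ by $f_{\mathcal A}$.
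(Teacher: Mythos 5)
Your overall strategy coincides with the one this paper uses to prove its generalization, Theorem \ref{Main:Swiss}: take the logarithmic differential $\omega=\frac{f_{\mathcal A}'(x)}{f_{\mathcal A}(x)}\,dx=\sum_{a\in\mathcal A}\frac{dx}{x-a}$, check that it has simple poles with residue $1$ at every place of ${\rm supp}(D)$, and invoke the residue description of the dual of a functional AG code; your bookkeeping with $(f_{\mathcal A}(x))=D-\deg(D)P_\infty$, $(f_{\mathcal A}'(x))=M-\deg(M)P_\infty$, the disjointness of ${\rm supp}(M)$ and ${\rm supp}(D)$, and the monotonicity argument for the inclusion $C(D,G)\subseteq C(D,G)^\bot$ are all correct.

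The gap is precisely the step you flag as the main one: you assert that the structural hypotheses on $\mathcal X$ (one point at infinity, a unique place $P_\infty$ centered there, $P_\infty$ rational) force $x$ to be unramified over the affine line, hence $(dx)=(2g-2)P_\infty$. That implication is false in general: the affine zeros of $dx$ sit exactly at the affine places that ramify in $\mathbb F_q(\mathcal X)/\mathbb F_q(x)$, and nothing in the quoted hypotheses excludes such places. For example, $y^2=x^3-x$ in odd characteristic is a plane curve with a single point at infinity carrying a unique rational place, yet $x$ ramifies at the three affine places where $y=0$, so $(dx)=P_1+P_2+P_3-3P_\infty\neq(2g-2)P_\infty$; note that the transversality of the lines $X=a$ with $a\in\mathcal A$ only prevents ramification above the values in $\mathcal A$, not above the others. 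The identity $(dx)=(2g-2)P_\infty$ is exactly Condition (2) in the definition of a Swiss curve: it is the hypothesis this paper isolates and then verifies curve by curve (via Remark \ref{remo}, i.e. a unique totally ramified place in $\mathbb F_q(\mathcal C)/\mathbb F_q(x)$, or Remark \ref{remo1}), and in \cite{SSSSSOCODESSS} it is secured by the additional assumptions imposed there on the plane model, not by the mere existence of a unique rational place at infinity. To complete your argument you must either import that property explicitly as a hypothesis or derive it from the actual setup of the cited theorem; as written, your computation only identifies $C(D,G)^\bot$ with $C(D,D-G+(\omega))$, whose divisor differs from the asserted one by the affine part of $(dx)$ — a divisor that is linearly equivalent to, but in general not equal to, $(2g-2+\deg(D)-\deg(M))P_\infty+M-G$.
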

\end{itemize}

\section{Swiss curves and codes}\label{sec:swiss}
\begin{definition}
A Swiss curve is a pair $(\mathcal{C},P)$ such that $\mathcal{C}$ is an absolutely irreducible $\mathbb{F}_q$-rational curve, $P$ is a place of $\mathbb{F}_q(\mathcal{C})$ and the following holds.
\begin{enumerate}
\item $P$ is rational;
\item there exists a function $x\in \mathbb{F}_q(\mathcal{C})$ such that $(dx)=(2g-2)P$.
\end{enumerate}
\end{definition}

\begin{remark}
Note that the existence of a function $x$ such that $(dx)=(2g-2)P$ implies that the Weierstrass semigroup at $P$ is symmetric, that is, $2g-1 \in G(P)$.
Indeed, $(dx)=(2g-2)P$ implies that $(2g-2)P$ is a canonical divisor and hence the dimension of its Riemann-Roch space is equal to $g$, see \cite[Proposition 1.6.2]{Sti}. Since there are exactly $g$ elements in $G(P)$ (and they are at most $2g-1$) we get that $2g-1 \in G(P)$.
\end{remark}

Even though Condition $(1)$ is not difficult to be forced, Condition $(2)$ seems to be quite cryptic. The following remark describes a way to force also Condition $(2)$ to hold.

\begin{remark}\label{remo}
One way to force the existence of $x$ is the following. Suppose that there exists a function $x \in \mathbb{F}_q(\mathcal{C})$ such that $dx \ne 0 $ and in $\mathbb{F}_q(\mathcal{C}) / \mathbb{F}_q(x)$ there is a unique ramification place and it is totally ramified. Without loss of generality we can assume that the totally ramified place is the pole $P_\infty$ of $x$. In fact, if such a place is the zero of $x-\alpha$, it is enough to replace $x$ with $1/(x-\alpha)$ and consider $\mathbb{F}_q(\mathcal{C}) / \mathbb{F}_q(1/(x-\alpha))$.
From \cite[Theorem 3.4.6]{Sti}, 
$$({\rm{Cotr}}_{\mathbb{F}_q(\mathcal{C}) / \mathbb{F}_q(x)}(dx))=(dx)_{\mathbb{F}_q(\mathcal{C})}={\rm{Con}}_{\mathbb{F}_q(\mathcal{C}) / \mathbb{F}_q(x)}((dx))+{\rm{Diff}}(\mathbb{F}_q(\mathcal{C}) / \mathbb{F}_q(x)).$$
 Since the support of both ${\rm{Con}}_{\mathbb{F}_q(\mathcal{C}) / \mathbb{F}_q(x)}((dx))$ and ${\rm{Diff}}(\mathbb{F}_q(\mathcal{C}) / \mathbb{F}_q(x))$  is just  $P_\infty$, we get that $(dx)_{\mathbb{F}_q(\mathcal{C})}=(2g-2)P_\infty$.
\end{remark}

Swiss curves can be constructed as explained in the following remark.

\begin{remark} \label{remo1}
Let $\mathcal{C}$ be an $\mathbb{F}_q$-rational curve of $p$-rank zero. Assume that there exist a rational place $P$  of $\mathbb{F}_q(\mathcal{C})$ and a $p$-subgroup $S$ of automorphisms of $\mathbb{F}_q(\mathcal{C})$ fixing $P$ such that the quotient curve $\mathcal{C}/S$ is rational. Then $(\mathcal{C},P)$ is a Swiss curve. Indeed $(1)$ is trivially satisfied and from \cite[Lemma 11.129]{HKT} $P$ is the unique place ramifying in $\mathcal{C}/S$ and it is totally ramified. Hence also Condition $(2)$ is satisfied by Remark \ref{remo}.
 \end{remark}

In the following, we will denote by $\mathbb{P}_q$ the set of all rational places of $ \mathbb{F}_q(\mathcal{C})$. Also, given a divisor $D$ and a place $Q$, we denote by $v_Q(D)$ the weight of $D$ at $Q$.

Consider a Swiss curve $(\mathcal{C},P)$ and the set
$$\mathcal{A}=\left\{\alpha \in  \mathbb{F}_q \ :\ (x-\alpha)_0-v_P((x-\alpha)_0) P\leq \sum_{Q\in \mathbb{P}_q\setminus  \{P\}} Q \right\}.$$

Basically, $\mathcal{A}$ consists of all the $\alpha\in \mathbb{F}_q$ such that all the zeros of the function $x-\alpha$ other than (possibly) $P$ are rational and simple.

Also, let 
$$D=\sum_{\alpha \in \mathcal{A}} \bigg((x-\alpha)_0-v_P((x-\alpha)_0) P \bigg).$$

\begin{theorem}\label{Main:Swiss}
Let $(\mathcal{C},P)$ be a Swiss curve. With the same notation as above, consider   another $\mathbb{F}_q$-rational divisor $G$ such that ${\rm supp}(G)\cap {\rm supp}(D) =\emptyset$. Then
\begin{enumerate}
    \item $C(D,G)^{\bot} =C(D,E+(\gamma+2g-2) P-G)$, for some positive divisor $E$ and some integer $\gamma$;
    \item if, in addition, $2G\leq E+(\gamma+2g-2) P $ then $C(D,G)\subset C(D,G)^{\bot}$.
\end{enumerate}
\end{theorem}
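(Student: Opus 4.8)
The plan is to reduce Theorem~\ref{Main:Swiss} to the classical formula for the dual of an AG code, namely $C(D,G)^\bot = C(D, D - G + (\eta))$ where $\eta$ is any Weil differential with $v_{P_i}(\eta) = -1$ and $\eta_{P_i}(1) = 1$ for every point $P_i$ in the support of $D$; see \cite[Theorem 2.2.8 and Proposition 2.2.10]{Sti}. The key is to produce such a differential explicitly from the Swiss structure. Write $f_{\mathcal A}(x) = \prod_{\alpha \in \mathcal A}(x-\alpha)$ and consider the differential $\eta = \dfrac{dx}{f_{\mathcal A}(x)}$. Since $(\mathcal C, P)$ is Swiss we have $(dx) = (2g-2)P$, so $(\eta) = (2g-2)P - (f_{\mathcal A}(x))$. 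Now decompose $(f_{\mathcal A}(x))$ into its zero and pole parts: the zero divisor is $D + m P$ for some integer $m = \sum_{\alpha \in \mathcal A} v_P((x-\alpha)_0) \ge 0$ (by the very definition of $\mathcal A$ and $D$, all zeros of $f_{\mathcal A}$ away from $P$ are rational, simple, and collected in $D$), while the pole divisor of $f_{\mathcal A}(x)$ is $|\mathcal A| (x)_\infty$, whose support is disjoint from $D$ and, I expect, will be shown to be supported away from the affine points appearing in $D$ — I would introduce the positive divisor $E$ precisely as the ``extra'' part $(f_{\mathcal A}(x))_\infty$ minus its $P$-component, or more cleanly set $E := (f_{\mathcal A}(x))_\infty - v_P((f_{\mathcal A}(x))_\infty)P$ and $\gamma := v_P((f_{\mathcal A}(x))_\infty) - m - (2g-2) + \deg D$ chosen so the bookkeeping closes; the exact normalization is routine once the divisor identity is written out.

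The substantive step is verifying that $\eta$ has the correct local behaviour at every point $Q$ in $\mathrm{supp}(D)$, i.e. $v_Q(\eta) = -1$ and residue $1$. Fix such a $Q$, lying above some $\alpha \in \mathcal A$. By definition of $\mathcal A$, $Q$ is a simple zero of $x - \alpha$, so $x - \alpha$ is a local uniformizer at $Q$ and $d(x-\alpha) = dx$ has valuation $0$ there (consistent with $(dx)=(2g-2)P$ having no affine support — note this uses that $P$ is the \emph{only} affine place of the divisor $(dx)$, which one gets from the Swiss hypothesis, possibly via Remark~\ref{remo}). Meanwhile $f_{\mathcal A}(x) = (x-\alpha)\prod_{\beta \ne \alpha}(x-\beta)$, and the product over $\beta \ne \alpha$ is a unit at $Q$ with value $\prod_{\beta\ne\alpha}(\alpha - \beta) = f_{\mathcal A}'(\alpha) \ne 0$. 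Hence near $Q$, with $t = x-\alpha$, we have $\eta = \dfrac{dt}{t \cdot (f_{\mathcal A}'(\alpha) + O(t))}$, so $v_Q(\eta) = -1$ and $\mathrm{res}_Q(\eta) = 1/f_{\mathcal A}'(\alpha)$. This is not quite $1$, but it is a nonzero constant depending only on $\alpha$, hence constant on the whole fibre $f_{\mathcal A}^{-1}(\alpha)$; the standard fix is to absorb these constants — since $f_{\mathcal A}'(\alpha)$ depends only on $\alpha$ and not on $Q$, scaling does not disturb the evaluation code, so $C(D,\eta) = C(D, (\eta))$ with the appropriate divisor. (Alternatively, one can choose $\eta = dx / f_{\mathcal A}(x)$ and invoke the residue-normalized version of duality as in \cite[Section 2.2]{Sti}, where only the vanishing of $v_Q$ plus nonvanishing residue is needed up to a diagonal change of code.)

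With the differential in hand, the dual formula gives $C(D,G)^\bot = C(D, D - G + (\eta))$. Substituting $(\eta) = (2g-2)P - D - mP + E$ (with $E$ the affine-disjoint positive part of $(f_{\mathcal A}(x))_\infty$ and the $P$-multiplicities collected into a single coefficient), the $D$'s cancel and one is left with $C(D,G)^\bot = C(D, E + (\gamma + 2g - 2)P - G)$ for the integer $\gamma := v_P((f_{\mathcal A})_\infty) - m$ (so $\gamma + 2g-2$ is exactly the $P$-coefficient that survives), which is statement (1); positivity of $E$ is clear since it is a pole divisor restricted to places $\ne P$. For statement (2), assume $2G \le E + (\gamma + 2g-2)P$. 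Then $G \le E + (\gamma+2g-2)P - G$, so $\mathcal L(G) \subseteq \mathcal L(E + (\gamma+2g-2)P - G)$, and applying $e_D$ to both sides yields $C(D,G) \subseteq C(D, E+(\gamma+2g-2)P - G) = C(D,G)^\bot$. The main obstacle is the first paragraph's bookkeeping combined with the residue normalization in the second: one must be careful that $P$ is genuinely the only place contributing to $(dx)$ (so that every $Q \in \mathrm{supp}(D)$ sees $v_Q(dx) = 0$), that the poles of $f_{\mathcal A}(x)$ other than $P$ are disjoint from $\mathrm{supp}(D)$ (immediate, as they are poles, not zeros, of $x-\alpha$), and that the nonzero-constant residues $1/f_{\mathcal A}'(\alpha)$ can be cleared without changing the code — all of which are standard but need to be stated cleanly so that $E$ and $\gamma$ are well-defined.
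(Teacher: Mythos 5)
Your overall strategy (exhibit an explicit differential with simple poles along $\mathrm{supp}(D)$ and invoke the standard duality theorem for AG codes) is the same as the paper's, but the specific differential you choose creates a genuine gap at exactly the step you flag as "standard". With $\eta=\frac{dx}{f_{\mathcal A}(x)}$ the residue at a place $Q$ over $\alpha\in\mathcal A$ is $1/f_{\mathcal A}'(\alpha)$, and these constants vary from coordinate to coordinate (in the paper's applications $f_{\mathcal A}'$ is far from constant on $\mathcal A$). The duality theorem then only gives
$C(D,G)^{\bot}=a\cdot C\bigl(D,\,D-G+(\eta)\bigr)$, where $a$ is the vector of residues and $a\cdot$ denotes componentwise multiplication. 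Your justification for discarding $a$ --- ``since $f_{\mathcal A}'(\alpha)$ depends only on $\alpha$ and not on $Q$, scaling does not disturb the evaluation code'' --- is not a proof and is false as stated: a diagonal twist by a non-constant vector changes the code as a subspace of $\mathbb{F}_q^N$, so with your $E$ and $\gamma$ (built from $(f_{\mathcal A}(x))_\infty$ alone) statement (1) is not an equality of codes, and consequently the containment in (2), which depends on the actual dual and not on a monomially equivalent copy, is not established.

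The gap can be closed, and doing so lands you precisely on the paper's argument. Since the residue vector is the evaluation along $D$ of the global function $1/f_{\mathcal A}'(x)$ (which has neither zeros nor poles on $\mathrm{supp}(D)$), one has $a\cdot C(D,H)=C\bigl(D,H+(f_{\mathcal A}'(x))\bigr)$; absorbing the twist this way replaces your differential by
$\frac{f_{\mathcal A}'(x)}{f_{\mathcal A}(x)}\,dx=\Bigl(\sum_{a\in\mathcal A}\frac{1}{x-a}\Bigr)dx$,
i.e.\ exactly the differential $\omega=h\,dx$ used in the paper, whose residue at every place of $\mathrm{supp}(D)$ is $1$ on the nose (locally $h=\frac{1}{x-\alpha}+\text{regular}$ and $x-\alpha$ is a uniformizer, while $v_Q(dx)=0$ because $(dx)=(2g-2)P$). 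The paper then simply writes $(h)=E-D+\gamma P$ with $E=(h)_0-v_P((h)_0)P$ and $\gamma=\deg D-\deg E$, and applies \cite[Theorem 2.72]{27}. Note in particular that the correct $E$ also contains the zeros of $f_{\mathcal A}'(x)$ away from $P$ (this is the divisor $M$ appearing in the applications), so your bookkeeping for $E$ and $\gamma$ must be revised accordingly; your verification of (2) from (1) via $\mathcal L(G)\subseteq\mathcal L\bigl(E+(\gamma+2g-2)P-G\bigr)$ is fine once (1) holds with the corrected divisor.
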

\begin{proof}
Define
$$h=\sum_{a \in \mathcal{A}} \frac{1}{x-a},\qquad\omega=\left(h\right)dx.$$
Clearly, places in ${\rm supp}(D)$ are simple poles of $h$. 

By hypothesis $(dx)=(2g-2)P$. Also, $\left(h\right)=E-D+\gamma P$, 
where
$$E=\left(h\right)_0-v_P((h)_0) P,\qquad\gamma =\deg D-\deg E.$$
Hence, $(\omega)=E-D+(\gamma+2g-2) P$.

Therefore $\omega$ has poles at places of $D$ and it is readily seen that the residue of $\omega$ at such places is $1$.
Now, the claim follows from \cite[Theorem 2.72]{27}.
\end{proof}

In Section \ref{sec:appl}, we describe several Swiss curves. Using Theorem \ref{Main:Swiss} we construct families of self-orthogonal codes, which provide stabilizer quantum codes by means of Theorem \ref{th:stab}.

\section{Applications to some Swiss curves}\label{sec:appl}

\subsection{GK curve}
The Giulietti-Korchm\'aros curve  over $\mathbb{F}_{q^6}$ is a non-singular curve in ${\rm PG}(3,\mathbb{K})$, $\mathbb{K}=\overline{\mathbb{F}}_{q}$, defined by the affine equations:
\[GK_q:
\begin{cases}
Y^{q+1} = X^q+X,\\
Z^{q^2-q+1} =Y^{q^2}-Y.
\end{cases}
\]
It has genus $g=\frac{(q^3+1)(q^2-2)}{2}+1$, and the number  of its $\mathbb{F}_{q^6}$-rational places is $q^8-q^6+q^5+1$. The GK curve first appeared in~\cite{GK2009} as a maximal curve over $\mathbb{F}_{q^6}$, since the latter number coincides with the Hasse-Weil upper bound, $q^6+2gq^3+1$.
The GK curve is the first example of an $\mathbb{F}_{q^6}$-maximal curve that is not $\mathbb{F}_{q^6}$-covered by the Hermitian curve, provided that $q>2$.

Since this curve is $\mathbb{F}_{q^6}$-maximal its $p$-rank is zero. Indeed, we will show that Condition $(2)$ is satisfied by applying Remark \ref{remo1}. 
The coordinate function $z$ has valuation $1$ at each affine $\mathbb{F}_{q^6}$-rational point of $GK_q$, hence $z$ is a separating element for $\mathbb{K}(GK_q)/\mathbb{K}$ by \cite[Prop. 3.10.2]{Sti}. Then $dz$ is non-zero by \cite[Prop. 4.1.8(c)]{Sti}. It is easily checked that $\mathbb{F}_{q^6}(GK_q)/\mathbb{F}_{q^6}(z)$ is a Galois extension of degree $q^3$; also, the unique place $P_\infty$ centered at the unique point at infinity of $GK_q$ is a ramification place for $\mathbb{F}_{q^6}(GK_q)/\mathbb{F}_{q^6}(z)$.
From Remark \ref{remo1}, $(GK_q,P_\infty)$ is a Swiss curve and
$(dz)=(2g-2)P_{\infty}=(q^3+1)(q^2-2)P_\infty$.

Let $m=q^2-q+1$.
It can be seen that if $\xi \in \mathbb{F}_{q^6}$ is such that $Y^{q^2}-Y=\xi^m$ has $q^2$ solutions in $\mathbb{F}_{q^6}$, then for each $\eta\in \mathbb{F}_{q^6}$ satisfying $\eta^{q^2}-\eta=\xi^m$ there are precisely $q$ values $\theta \in \mathbb{F}_{q^6}$ such that $\theta^q+\theta = \eta^{q+1}$. 
Also, the values $\xi\in \mathbb{F}_{q^6}$ for which all the zeros of $z-\xi$  are rational are those satisfying 
\begin{equation}\label{eq:cond}
\xi^{mq^4}+\xi^{mq^2}+\xi^m=0;
\end{equation}
moreover for each of them there are exactly $q^3$ triples $(\bar x,\bar y,\bar z) \in \mathbb{F}_{q^6}^3$ such that 
$\bar{z}^m = \bar{y}^{q^2}-\bar{y}$ and $\bar{y}^{q+1}=\bar{x}^q+\bar{x}$. This implies that there are exactly $q^5-q^3+q^2$ values $\xi\in \mathbb{F}_{q^6}$ satisfying Equation \eqref{eq:cond}.
Let 
$$\Xi =\{\xi \in \mathbb{F}_{q^6} \ : \  \xi^{mq^4}+\xi^{mq^2}+\xi^m=0\}.$$
Then $$\Xi \setminus \{0\}=\left\{\xi \in \mathbb{F}_{q^6} \ : \  \left(\xi^{(q-1)(q^3+1)}\right)^{q^2+1}+\left(\xi^{(q-1)(q^3+1)}\right)+1=0\right\}.$$
Note that if $\mu_{q^2+q+1}$ denotes the set of the $(q^2+q+1)$-th roots of unity then
$$\{ \theta \in \mu_{q^2+q+1} \ : \ \theta^{q^2+1}+\theta+1 =0\}=\{ \theta \in \mu_{q^2+q+1} \ : \ \theta^{q+1}+\theta^q+1 =0\}.$$
Thus, the polynomial 
$$f(Z)=Z^{q^5-q^3+q^2}+Z^{q^5-q^4+q^2-q+1}+Z\in \mathbb{F}_{q^6}[Z]$$ factorizes completely over $\mathbb{F}_{q^6}$, and
$$f(Z)=\prod _{\xi \in \Xi }(Z-\xi).$$
Also, $$f^{\prime}(Z)
=Z^{q^5-q^4+q^2-q}+1=(Z^{(q^3+1)(q-1)}+1)^q,$$
and hence the zero divisor of the rational function $f^\prime(z)\in\mathbb{K}(GK_q)$ satisfies
$$\deg(f^{\prime}(z))_0 =(q^5-q^4+q^2-q)q^3.$$
Now consider in $\mathbb{K}(GK_q)$ the function 
$$ \sum_{\xi \in \Xi}\frac{1}{z-\xi}=\frac{f^{\prime}(z)}{f(z)}.$$
Its principal divisor is 
$$M-D+(q^4-q^3+q)q^3P_{\infty},$$
where $M$ is the zero divisor of $f^{\prime}(z)$  and $D$ is the zero divisor $\sum_{P \in \mathbb{P}_{q^6}(GK_q)\setminus \{P_{\infty}\}}P$ of $f(z)$ of degree $q^5(q^3-q+1)$.
The divisor of 
$$\omega=\sum_{\xi \in \Xi}\frac{1}{z-\xi}\,dz$$
is 
$$M-D+[2g-2+(q^4-q^3+q)q^3]P_{\infty}=M-D+[q^7-q^6+q^5+q^4-2q^3+q^2-2]P_{\infty}.$$

Consider the one-point divisor $G=sP_{\infty}$. 
By Theorem \ref{Main:Swiss} and its proof,
\begin{eqnarray*}
C(D,G)^{\bot}&=&C(D,M+[2g-2+(q^4-q^3+q)q^3-s]P_{\infty})\\
&=&C(D,M+[q^7-q^6+q^5+q^4-2q^3+q^2-2-s]P_{\infty}).
\end{eqnarray*}
Also, $C(D,G) \subset C(D,G)^{\bot}$ if
$$s \leq \frac{q^7-q^6+q^5+q^4-2q^3+q^2-2}{2}.$$
Finally, by Theorem \ref{th:stab}, we obtain the following result.
\begin{theorem}\label{Th:GKQuantum}
With the same notation as above, consider the $q^6$-ary code $C(D,s P_{\infty})$ from the GK curve.
Assume that 
$$q^5-2q^3+q^2-2\leq s \leq \frac{q^7-q^6+q^5+q^4-2q^3+q^2-2}{2}.$$
Then there exists
a quantum code with parameters
$$[[\,q^8-q^6+q^5,\;q^8-q^6+2q^5-2q^3+q^2-2-2s,\;\geq s-q^5+2q^3-q^2+2\,]]_{q^6}.$$
\end{theorem}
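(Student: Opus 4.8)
The plan is to derive the statement directly from Theorem~\ref{Main:Swiss} and Theorem~\ref{th:stab}, since essentially all of the geometric input has already been assembled in the discussion preceding the theorem. Explicitly, $(GK_q,P_\infty)$ is a Swiss curve with separating element $z$ satisfying $(dz)=(2g-2)P_\infty$ and $2g-2=(q^3+1)(q^2-2)=q^5-2q^3+q^2-2$; the divisor $D=\sum_{P\neq P_\infty}P$ equals the zero divisor $(f(z))_0$ and has degree $N:=q^8-q^6+q^5$; the divisor $E$ of Theorem~\ref{Main:Swiss} is $M=(f'(z))_0$ with $f'(z)=(z^{(q-1)(q^3+1)}+1)^q$, it is supported away from $P_\infty$ and disjoint from ${\rm supp}(D)$ because $f$ has simple roots so that $\gcd(f,f')=1$, and $\deg M=q^8-q^7+q^5-q^4$ (the outer $q$-th power contributes a factor $q$, while summing ramification indices over the places above each of the $(q-1)(q^3+1)$ distinct roots of $f'$ contributes $[\mathbb{F}_{q^6}(GK_q):\mathbb{F}_{q^6}(z)]=q^3$); finally the integer $\gamma$ equals $(q^4-q^3+q)q^3$, so that $\gamma+2g-2=A:=q^7-q^6+q^5+q^4-2q^3+q^2-2$.

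Taking the one-point divisor $G=sP_\infty$, whose support $\{P_\infty\}$ is disjoint from ${\rm supp}(D)$, part~(1) of Theorem~\ref{Main:Swiss} gives $C(D,sP_\infty)^\bot=C(D,M+(A-s)P_\infty)$. Since $M\geq0$ and $P_\infty\notin{\rm supp}(M)$, the hypothesis $2sP_\infty\leq M+AP_\infty$ of part~(2) reduces to the numerical condition $2s\leq A$; hence, as soon as $s\leq A/2$, the code $C:=C(D,sP_\infty)$ is self-orthogonal.

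It then remains to compute the parameters of $C$ and of $C^\bot$. From $s\geq 2g-2$ together with $s\leq A/2<N$ (an elementary inequality valid for $q\geq2$), the Riemann--Roch theorem gives $\dim_{\mathbb{F}_{q^6}}C=\ell(sP_\infty)=s+1-g$ and injectivity of $e_D$, so that $N-2\dim C=N-2s-2+2g=q^8-q^6+2q^5-2q^3+q^2-2-2s$. For the dual, $\deg\bigl(M+(A-s)P_\infty\bigr)=\deg M+A-s=q^8-q^6+2q^5-2q^3+q^2-2-s$, hence the standard bound $d\geq N-\deg G$ applied to the AG code $C^\bot=C(D,M+(A-s)P_\infty)$ yields $d^\bot\geq s-q^5+2q^3-q^2+2$, which is non-negative precisely because $s\geq 2g-2$.

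Feeding the self-orthogonal $[N,\dim C,d]_{q^6}$-code $C$ into Theorem~\ref{th:stab} then produces an $[[N,N-2\dim C,\geq d^\bot]]_{q^6}$ stabilizer quantum code, and substituting the expressions above reproduces exactly the parameters of the statement. The proof is essentially routine bookkeeping, so there is no serious obstacle; the one point that needs care is the dimension formula at the endpoint $s=2g-2$, where $\ell\bigl((2g-2)P_\infty\bigr)=g$ rather than $g-1$ because the Weierstrass semigroup at $P_\infty$ is symmetric. At that endpoint the asserted distance bound is vacuous, so I would either state the effective content of the theorem for $s>2g-2$ or dispose of $s=2g-2$ by a short separate remark; apart from that, the only computation really worth double-checking is $\deg M=q^8-q^7+q^5-q^4$, i.e.\ the ramification of $z$ over the roots of $f'$ in $\mathbb{F}_{q^6}(GK_q)/\mathbb{F}_{q^6}(z)$.
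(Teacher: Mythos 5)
Your proposal follows essentially the same route as the paper: the computations preceding the theorem (the divisors of $f(z)$, $f'(z)$ and $\omega$, Theorem~\ref{Main:Swiss} with $E=M$ and $\gamma=(q^4-q^3+q)q^3$, followed by Theorem~\ref{th:stab}), with the dimension and dual-distance bookkeeping made explicit and correct. Your observation about the endpoint $s=2g-2$, where $\ell\bigl((2g-2)P_\infty\bigr)=g$ because $(2g-2)P_\infty$ is canonical, is a genuine subtlety that the paper passes over silently.
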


\subsection{GGS curves}\label{SubSection:GGS}
Let $q$ be a prime power and $n\geq5$ be an odd integer. 
The GGS curve $GGS(q,n)$ is defined by the equations
\begin{equation}\label{GGS_equation}
GGS(q,n): \left\{
 \begin{array}{l}
X^q + X = Y^{q+1}\\
Y^{q^2}-Y= Z^m\\
\end{array}
\right.
,
\end{equation}
where $m= (q^n+1)/(q+1)$; see \cite{GGS2010}.
The genus of $GGS(q,n)$ is $\frac{1}{2}(q-1)(q^{n+1}+q^n-q^2)$, and $GGS(q,n)$ is $\mathbb F_{q^{2n}}$-maximal.
Let $P_0=(0,0,0)$, $P_{(a,b,c)}=(a,b,c)$, and let $P_{\infty}$ be the unique ideal point of $GGS(q,n)$.
Note that $GGS(q,n)$ is singular, being $P_\infty$ its unique singular point. Yet, there is only one place of $GGS(q,n)$ centered at $P_\infty$.
The divisors of the coordinate functions $x,y,z$ satisfying $x^q + x = y^{q+1}$ and $y^{q^2}-y= z^m$ are
\begin{eqnarray*}
(x)&=&m(q+1)P_0-m(q+1)P_{\infty},\\
(y)&=&m\sum_{\alpha^q+\alpha=0} P_{(\alpha,0,0)}-mqP_{\infty},\\
(z)&=&\sum_{\scriptsize\begin{array}{l} \alpha^q+\alpha=\beta\\ \beta \in \mathbb{F}_{q^2}\\ \end{array}} P_{(\alpha,\beta,0)}-q^3P_{\infty}.
\end{eqnarray*}
As for the GK curve, the curve $GGS(q,n)$ has $p$-rank zero because it is $\mathbb{F}_{q^{2n}}$-maximal, and $(dz)=(2g-2)P_\infty$ being $\mathbb F_{q^{2n}}(GGS(q,n))/\mathbb F_{q^{2n}}(z)$ a Galois extension of degree $q^3$ in which $P_\infty$ is totally ramified. Hence $(GGS(q,n),P_\infty)$ is a Swiss curve.
From the proof of \cite[Theorem 2.6]{GGS2010} every $\mathbb{F}_{q^{2n}}$-rational point of the curve $Y^{q^2}-Y=Z^m$ which is not centered at the unique point at infinity of the curve splits completely in $\mathbb{F}_{q^{2n}}(GGS(q,n))/\mathbb{F}_{q^{2n}}(y,z)$. This is equivalent to say, as for the GK curve, that if $\xi \in \mathbb{F}_{q^{2n}}$ is such that $Y^{q^2}-Y=\xi^m$ has $q^2$ solutions in $\mathbb{F}_{q^{2n}}$, then for each $\eta\in \mathbb{F}_{q^{2n}}$ satisfying $\eta^{q^2}-\eta=\xi^m$ there are precisely $q$ values $\theta \in \mathbb{F}_{q^{2n}}$ such that $\theta^q+\theta = \eta^{q+1}$. 
Also, the values $\xi\in \mathbb{F}_{q^{2n}}$ for which all  the zeros of $z-\xi$ belong to $\mathbb{F}_{q^{2n}}$ are those satisfying 
$$\sum_{i=0}^{n-1} (\xi^m)^{q^{2i}}=0;$$
moreover for each of them there are exactly $q^3$ triples $(x,y,z) \in \mathbb{F}_{q^{2n}}^3$ such that 
$z^m = y^{q^2}-y$ and $y^{q+1}=x^q+x$. 
This means that there are exactly $q^{2n-1}-q^n+q^{n-1}$ such values $\xi\in \mathbb{F}_{q^{2n}}$ as $|GGS(q,n)(\mathbb{F}_{q^{2n}})|=q^{2n+2}-q^{n+3}+q^{n+2}+1$.
Let 
$$\Xi =\{\xi \in \mathbb{F}_{q^{2n}} \ : \  \sum_{i=0}^{n-1} (\xi^m)^{q^{2i}}=0\}.$$
Then $$\Xi \setminus \{0\}=\{\xi \in \mathbb{F}_{q^{2n}} \ : (\xi^m)^{q^{2(n-1)}-1}+(\xi^m)^{q^{2(n-2)}-1}+\cdots+(\xi^m)^{q^2-1}+1=0\}$$
has cardinality $(q^n+1)(q^{n-1}-1)$.

Let $\mu_{(q^n-1)/(q-1)}$ be the set of $\frac{q^n-1}{q-1}$-th roots of unity, and let $k=\frac{n-1}{2}\geq2$. Then
$$\Theta=\{\theta \in \mu_{(q^n-1)/(q-1)} \mid \theta^{q^{2(n-2)}+q^{2(n-3)}+\cdots+q^2+1} + \theta^{q^{2(n-3)}+\cdots+q^2+1}+\cdots+\theta^{q^2+1}+\theta+1=0\}$$
$$=\{\theta \in \mu_{(q^n-1)/(q-1)} \mid p(\theta)=0\},$$
where
\begin{equation}\label{eq:polp}
p(Z)=1+\sum_{i=0}^{k-1} Z^{\sum_{j=0}^{i}q^{2j} + \sum_{j=0}^{k-1} q^{2j+1}} + \sum_{i=0}^{k-1} Z^{\sum_{j=0}^{i} q^{2j+1}}\;\in\mathbb{F}_{q^{2n}}[Z]
\end{equation}
which is a separable polynomial of degree $\sum_{j=0}^{2k-1} q^j$, see the proof of \cite[Lemma 2]{ABQ} and in particular \cite[Equation (4)]{ABQ}.

Thus, the polynomial 
$$f(Z)=Z \cdot p(Z^{(q^n+1)(q-1)}) \in \mathbb{F}_{q^{2n}}[X]$$ factorizes completely over $\mathbb{F}_{q^{2n}}$, and
\begin{eqnarray*}
f(Z)&=\prod _{\xi \in \Xi }(Z-\xi)=&Z+\sum_{i=0}^{k-1} Z^{1+\sum_{j=0}^{i} q^{2j}(q^n+1)(q-1) + \sum_{j=0}^{k-1} q^{2j+1} (q^n+1)(q-1)}\\
& & + \sum_{i=0}^{k-1} Z^{1+\sum_{j=0}^{i} q^{2j+1}(q^n+1)(q-1)}.
\end{eqnarray*}
Also, $$f^{\prime}(Z)=1+Z^{q(q^n+1)(q-1)} + \sum_{i=1}^{k-1} Z^{\sum_{j=0}^{i} q^{2j+1}(q^n+1)(q-1)}$$
and hence 
$$\deg(f^{\prime}(z))_0 =\bigg(q\frac{(q^n+1)}{q+1}(q^{n-1}-1)\bigg)q^3.$$
Now consider the function 
$$ \sum_{\xi \in \Xi}\frac{1}{z-\xi}=\frac{f^{\prime}(z)}{f(z)}.$$
Its principal divisor is 
$$M-D+\bigg( (q^{n-1}-1)\frac{q^n+1}{q+1}+1\bigg)q^3P_{\infty},$$
where $M$ is the zero divisor of $f^{\prime}(z)$  and 
$$D=(f(z))_0= \sum_{P \in \mathbb{P}_{q^{2n}}(GGS(q,n))\setminus \{P_{\infty}\}}P$$ has degree $q^3((q^{n-1}-1)(q^n+1)+1)= q^{2n+2}-q^{n+3}+q^{n+2}$.
The principal divisor of 
$$\omega=\sum_{\xi \in \Xi}\frac{1}{z-\xi}\,dz$$
is 
$$M-D+\bigg[\bigg( (q^{n-1}-1)\frac{q^n+1}{q+1}+1\bigg)q^3+2g-2\bigg]P_{\infty}.$$
Consider the one-point divisor $G=s P_{\infty}$.
By Theorem \ref{Main:Swiss} and its proof,
$$
C(D,G)^{\bot}=C\bigg(D,M+\bigg[\bigg( (q^{n-1}-1)\frac{q^n+1}{q+1}+1\bigg)q^3+2g-2-s\bigg]P_{\infty}\bigg)$$
$$=C\bigg(D,M+\bigg[\bigg( (q^{n-1}-1)\frac{q^n+1}{q+1}+1\bigg)q^3+(q-1)(q^{n+1}+q^n-q^2)-2-s\bigg]P_{\infty}\bigg).$$
Also, $C(D,G) \subset C(D,G)^{\bot}$ if
$$s \leq \frac{\bigg[\bigg( (q^{n-1}-1)\frac{q^n+1}{q+1}+1\bigg)q^3+(q-1)(q^{n+1}+q^n-q^2)-2\bigg]}{2}.$$
From Theorem \ref{th:stab} we have the following result.
\begin{theorem}\label{Th:GGSQuantum}
With the same notation as above, consider the $q^{2n}$-ary code $C(D,mP_{\infty})$ from the GGS curve.
Assume that 
$$(q-1)(q^{n+1}+q^n-q^2)-2\leq s \leq \frac{\bigg[\bigg( (q^{n-1}-1)\frac{q^n+1}{q+1}+1\bigg)q^3+(q-1)(q^{n+1}+q^n-q^2)-2\bigg]}{2}.$$
Then there exists
a quantum code with parameters
$$[[\,q^{2n+2}-q^{n+3}+q^{n+2},\;q^{2n+2}-q^{n+3}+q^{n+2}+(q-1)(q^{n+1}+q^n-q^2)-2-2s,$$
$$\geq s-(q-1)(q^{n+1}+q^n-q^2)+2\,]]_{q^{2n}}.$$
\end{theorem}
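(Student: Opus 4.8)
The plan is to specialize the general results of Section \ref{sec:swiss} to the one-point divisor $G = sP_\infty$ on $GGS(q,n)$ and then to invoke Theorem \ref{th:stab}; all the geometric input has already been assembled in the discussion preceding the statement, so what remains is essentially to read off and simplify the parameters.

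First I would recall that $(GGS(q,n), P_\infty)$ is a Swiss curve with $(dz) = (2g-2)P_\infty$, which follows from the $\mathbb{F}_{q^{2n}}$-maximality (hence $p$-rank zero) together with Remark \ref{remo1} applied to the Galois extension $\mathbb{F}_{q^{2n}}(GGS(q,n))/\mathbb{F}_{q^{2n}}(z)$ of degree $q^3$, in which $P_\infty$ is totally ramified. Taking $G = sP_\infty$ and applying Theorem \ref{Main:Swiss} with the divisor of $\omega = \big(\sum_{\xi \in \Xi}(z-\xi)^{-1}\big)\,dz$ computed above, part (1) gives the displayed identity for $C(D,G)^\bot$, and part (2) gives $C(D,G) \subseteq C(D,G)^\bot$ as soon as $2sP_\infty \le M + \big[\big((q^{n-1}-1)\tfrac{q^n+1}{q+1}+1\big)q^3 + 2g - 2\big]P_\infty$. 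Since $M \ge 0$ and $v_{P_\infty}(M) = 0$, this inequality of divisors reduces to the upper bound on $s$ in the hypothesis, so $C(D,G)$ is self-orthogonal.

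Next I would pin down the parameters. The length is $N = \deg D = q^3\big((q^{n-1}-1)(q^n+1)+1\big) = q^{2n+2} - q^{n+3} + q^{n+2}$. The lower bound on $s$, together with $s < N$ (obtained by comparing the assumed upper bound for $s$ with $N$), ensures via the Riemann--Roch theorem that $e_D$ is injective and that $k := \dim C(D,G) = \ell(sP_\infty) = s + 1 - g$, where $g = \tfrac12(q-1)(q^{n+1}+q^n-q^2)$; hence $N - 2k = N + 2g - 2 - 2s$. For the minimum distance of the dual I would use the bound $d^\bot \ge \deg G - 2g + 2 = s - (q-1)(q^{n+1}+q^n-q^2) + 2$ recorded in Section \ref{sec:preliminaries}. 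Applying Theorem \ref{th:stab} to the self-orthogonal $[N,k,d]_{q^{2n}}$ code $C(D,G)$ then produces an $[[N, N-2k, \ge d^\bot]]_{q^{2n}}$ stabilizer quantum code; substituting the values of $N$, $k$, $g$ and $d^\bot$ gives precisely the parameters in the statement.

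There is no genuine obstacle: the theorem is a direct consequence of Theorems \ref{Main:Swiss} and \ref{th:stab} once the divisor data of the GGS section is in hand. The only points requiring attention are arithmetical, namely checking that $\deg D = q^{2n+2} - q^{n+3} + q^{n+2}$ and that the coefficient of $P_\infty$ appearing in $\omega$ --- which relies on the identity $\deg M + \big((q^{n-1}-1)\tfrac{q^n+1}{q+1}+1\big)q^3 = N$ --- is the right one, so that the closed forms for the quantum dimension and the designed distance come out as stated after substituting $2g = (q-1)(q^{n+1}+q^n-q^2)$.
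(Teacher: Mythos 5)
Your proposal is correct and follows essentially the same route as the paper: the paper likewise obtains self-orthogonality of $C(D,sP_\infty)$ from the divisor of $\omega=\big(\sum_{\xi\in\Xi}(z-\xi)^{-1}\big)dz$ via Theorem \ref{Main:Swiss} and then applies Theorem \ref{th:stab}, with the parameters read off from $\deg D$, Riemann--Roch for $\ell(sP_\infty)$, and the Goppa bound $d^{\bot}\geq \deg G-2g+2$. Your additional explicit checks (the divisor inequality at $P_\infty$ and the identity $\deg M+\big((q^{n-1}-1)\tfrac{q^n+1}{q+1}+1\big)q^3=\deg D$) are exactly the computations the paper carries out in the discussion preceding the statement.
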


\subsection{Abd\'on-Bezerra-Quoos curve}\label{SubSection:ABQ}

Let $q$ be a prime power and $n\geq3$ be an odd integer. 
The Abd\'on-Bezerra-Quoos curve $ABQ(q,n)$ is defined by the equation
\begin{equation}\label{ABQ_equation}
ABQ(q,n): 
Y^{q^2}-Y= X^m,
\end{equation}
where $m= (q^n+1)/(q+1)$; see \cite{ABQ,GGS2010}.
The curve $ABQ(q,n)$ is singular, has genus $\frac{1}{2}(q-1)(q^{n}-q)$, and is $\mathbb F_{q^{2n}}$-maximal.
Let $P_0=(0,0,0)$, $P_{(a,b,c)}=(a,b,c)$, and let $P_{\infty}$ be the unique ideal point of $ABQ(q,n)$.
The point $P_\infty$ is the unique singular point of $ABQ(q,n)$. Yet, there is only one place of $ABQ(q,n)$ centered at $P_\infty$.
As for the GK and GGS cases, $ABQ(q,n)$  is $\mathbb{F}_{q^{2n}}$-maximal and hence has $p$-rank zero. The extension  $\mathbb F_{q^{2n}}(ABQ(q,n))/\mathbb F_{q^{2n}}(x)$ is a Galois extension of degree $q^2$, and $P_\infty$ is totally ramified in it. Thus, $(dx)=(2g-2)P_\infty$ and $(ABQ(q,n),P_\infty)$ is a Swiss curve.

An element  $\xi \in \mathbb{F}_{q^{2n}}$ is such that $Y^{q^2}-Y=\xi^m$ has $q^2$ solutions in $\mathbb{F}_{q^{2n}}$ if and only if
$$\sum_{i=0}^{n-1} (\xi^m)^{q^{2i}}=0.$$
Also, there are exactly $q^{2n-1}-q^n+q^{n-1}$ such values $\xi\in \mathbb{F}_{q^{2n}}$ as $|ABQ(q,n)(\mathbb{F}_{q^{2n}})|=q^{2n+1}-q^{n+2}+q^{n+1}+1$.

Arguing as in Section \ref{SubSection:GGS}, the polynomial
$$f(Z)=Z \cdot p(Z^{(q^n+1)(q-1)})\, \in \mathbb{F}_{q^{2n}}[Z]$$ factorizes completely over $\mathbb{F}_{q^{2n}}$; here, the polynomial $p(X)\in\mathbb{F}_{q^{2n}}[X]$ is as in Equation \eqref{eq:polp}. Also,
\begin{eqnarray*}
f(Z)=\prod _{\xi \in \Xi }(Z-\xi)&=&Z+\sum_{i=0}^{k-1} Z^{1+\sum_{j=0}^{i} q^{2j}(q^n+1)(q-1) + \sum_{j=0}^{k-1} q^{2j+1} (q^n+1)(q-1)} \\
&&+ \sum_{i=0}^{k-1} Z^{1+\sum_{j=0}^{i} q^{2j+1}(q^n+1)(q-1)}.
\end{eqnarray*}
Also, $$f^{\prime}(Z)=1+Z^{q(q^n+1)(q-1)} + \sum_{i=1}^{k-1} Z^{\sum_{j=0}^{i} q^{2j+1}(q^n+1)(q-1)}$$
and hence 
$$\deg(f^{\prime}(z))_0 =\bigg(q\frac{(q^n+1)}{q+1}(q^{n-1}-1)\bigg)q^2.$$
Now, the function 
$$ \sum_{\xi \in \Xi}\frac{1}{z-\xi}=\frac{f^{\prime}(z)}{f(z)}$$
has principal divisor 
$$M-D+\bigg( (q^{n-1}-1)\frac{q^n+1}{q+1}+1\bigg)q^2 P_{\infty},$$
where $M$ is the zero divisor of $f^{\prime}(z)$  and 
$$D=(f(z))_0= \sum_{P \in \mathbb{P}_{q^{2n}}(ABQ(q,n))\setminus \{P_{\infty}\}}P$$ has degree $q^2((q^{n-1}-1)(q^n+1)+1)= q^{2n+1}-q^{n+2}+q^{n+1}$.
The principal divisor of 
$$\omega=\sum_{\xi \in \Xi}\frac{1}{z-\xi}dz$$
is 
$$M-D+\bigg[\bigg( (q^{n-1}-1)\frac{q^n+1}{q+1}+1\bigg)q^2+2g-2\bigg]P_{\infty}.$$
Consider the one-point divisor $G=s P_{\infty}$. By Theorem \ref{Main:Swiss} and its proof,
\begin{eqnarray*}
C(D,G)^{\bot}&=&C\bigg(D,M+\bigg[\bigg( (q^{n-1}-1)\frac{q^n+1}{q+1}+1\bigg)q^2+2g-2-s\bigg]P_{\infty}\bigg)\\
&=&C\bigg(D,M+\bigg[\bigg( (q^{n-1}-1)\frac{q^n+1}{q+1}+1\bigg)q^2+(q-1)(q^n-q)-2-s\bigg]P_{\infty}\bigg).
\end{eqnarray*}
Also, $C(D,G) \subset C(D,G)^{\bot}$ if
$$s \leq \frac{\bigg[\bigg( (q^{n-1}-1)\frac{q^n+1}{q+1}+1\bigg)q^2+(q-1)(q^{n}-q)-2\bigg]}{2}.$$
The theorem below follows from Theorem \ref{th:stab}.
\begin{theorem}\label{Th:ABQQuantum}
With the same notation as above, consider the $q^{2n}$-ary code $C(D,mP_{\infty})$ from the ABQ curve.
Assume that 
$$(q-1)(q^{n}-q)-2\leq s \leq \frac{\bigg[\bigg( (q^{n-1}-1)\frac{q^n+1}{q+1}+1\bigg)q^2+(q-1)(q^{n}-q)-2\bigg]}{2}.$$
Then there exists a quantum code with parameters
$$[[\,q^{2n+1}-q^{n+2}+q^{n+1},\;q^{2n+1}-q^{n+2}+q^{n+1}+(q-1)(q^{n}-q)-2-2s,\,\geq s-(q-1)(q^{n}-q)+2\,]]_{q^{2n}}.$$
\end{theorem}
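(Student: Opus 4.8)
The plan is to derive the statement as a straightforward assembly of Theorem~\ref{Main:Swiss}, applied to the pair $(ABQ(q,n),P_\infty)$ with the one‑point divisor $G=sP_\infty$, and Theorem~\ref{th:stab}; essentially all the divisor bookkeeping has already been done in the discussion preceding the statement, so what is left is to verify the Swiss hypotheses, substitute, and check the numerical inequalities that govern the Riemann--Roch estimates.

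First I would confirm that $(ABQ(q,n),P_\infty)$ is a Swiss curve. Condition $(1)$ is immediate since $P_\infty$ is rational, and Condition $(2)$ comes from Remark~\ref{remo1}: the curve has $p$-rank zero (being $\mathbb F_{q^{2n}}$-maximal), the extension $\mathbb F_{q^{2n}}(ABQ(q,n))/\mathbb F_{q^{2n}}(x)$ is Galois of degree $q^2$ (a power of $p$) with quotient the rational function field $\mathbb F_{q^{2n}}(x)$, and $P_\infty$ is the unique ramified place, hence totally ramified; therefore $(dx)=(2g-2)P_\infty$ with $2g-2=(q-1)(q^n-q)-2$. Next I would make $\mathcal A$ and $D$ explicit. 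Since the cover $\mathbb F_{q^{2n}}(ABQ(q,n))/\mathbb F_{q^{2n}}(x)$ is unramified away from $P_\infty$, each $x-\xi$ has $q^2$ distinct finite zeros; they are all $\mathbb F_{q^{2n}}$-rational exactly when $\xi\in\Xi$, and conversely every affine $\mathbb F_{q^{2n}}$-rational place of $ABQ(q,n)$ has its $x$-coordinate in $\Xi$ because the $q^2$ roots of $Y^{q^2}-Y=\xi^m$ differ by elements of $\mathbb F_{q^2}\subseteq\mathbb F_{q^{2n}}$. Hence $\mathcal A=\Xi$, $D=\sum_{\xi\in\Xi}(x-\xi)_0$ is the sum of all rational places of $ABQ(q,n)$ other than $P_\infty$, so $N=\deg D=q^2\,|\Xi|=q^{2n+1}-q^{n+2}+q^{n+1}$, and the differential $\omega=\bigl(\sum_{\xi\in\Xi}\tfrac{1}{x-\xi}\bigr)dx=\tfrac{f'(x)}{f(x)}\,dx$ has simple poles with residue $1$ at the places of $D$, with $(\omega)=M-D+\bigl[\bigl((q^{n-1}-1)\tfrac{q^n+1}{q+1}+1\bigr)q^2+2g-2\bigr]P_\infty$, $M=(f'(x))_0$ effective and $P_\infty\notin{\rm supp}(M)$; this is exactly the form $E-D+(\gamma+2g-2)P_\infty$ demanded in the proof of Theorem~\ref{Main:Swiss}.

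Then Theorem~\ref{Main:Swiss}$(1)$ yields $C(D,sP_\infty)^\bot=C\bigl(D,\,M+[\,((q^{n-1}-1)\tfrac{q^n+1}{q+1}+1)q^2+2g-2-s\,]P_\infty\bigr)$, and part $(2)$ gives $C(D,sP_\infty)\subseteq C(D,sP_\infty)^\bot$ as soon as $2s\le ((q^{n-1}-1)\tfrac{q^n+1}{q+1}+1)q^2+2g-2$, which is the upper bound in the hypothesis. To finish I would read off the parameters. For $s$ in the stated range one has $2g-2<s<N$ (the upper bound on $s$ is well below $N$ since $M$ is effective; it is cleanest to take $s>2g-2$ so the dimension formula is exact), so Riemann--Roch gives $\dim C(D,sP_\infty)=k=s+1-g$, while the dual is the AG code attached to a divisor of degree $\deg M+(\gamma+2g-2)-s=N+2g-2-s<N$, hence $d^\bot\ge N-(N+2g-2-s)=s-(q-1)(q^n-q)+2$. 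Substituting $k$ and $d^\bot$ into Theorem~\ref{th:stab} produces a stabilizer quantum code of length $N$, dimension $N-2k=q^{2n+1}-q^{n+2}+q^{n+1}+(q-1)(q^n-q)-2-2s$, and minimum distance at least $s-(q-1)(q^n-q)+2$, as claimed.

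The step I expect to be the genuine obstacle is the combinatorics that makes $\mathcal A$ explicit, i.e. showing that $p(Z)$ from \eqref{eq:polp} is separable of degree $\sum_{j=0}^{2k-1}q^j$ so that $f(Z)=Z\cdot p(Z^{(q^n+1)(q-1)})$ splits completely over $\mathbb F_{q^{2n}}$, that $|\Xi|$ takes the asserted value, and hence that $\deg D=|ABQ(q,n)(\mathbb F_{q^{2n}})|-1$. Once these numerics are in place the theorem is a formal consequence of Theorems~\ref{Main:Swiss} and~\ref{th:stab} together with the standard Riemann--Roch estimates for one-point AG codes, the whole argument mirroring the GK and GGS cases with the degree of the relevant Galois cover changed from $q^3$ to $q^2$.
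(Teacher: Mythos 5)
Your proposal matches the paper's argument essentially step for step: verify that $(ABQ(q,n),P_\infty)$ is Swiss via Remark~\ref{remo1}, identify $\mathcal{A}=\Xi$ so that $D$ is the sum of all $\mathbb{F}_{q^{2n}}$-rational places other than $P_\infty$ of degree $q^{2n+1}-q^{n+2}+q^{n+1}$, use $\omega=\frac{f'(x)}{f(x)}\,dx$ together with Theorem~\ref{Main:Swiss} to get the dual description and the self-orthogonality condition $2s\leq\big((q^{n-1}-1)\frac{q^n+1}{q+1}+1\big)q^2+2g-2$, and then apply Theorem~\ref{th:stab} with the Riemann--Roch dimension $s+1-g$ and the designed dual distance $s-(2g-2)$, exactly as in the paper's GK/GGS template with the Galois cover of degree $q^2$. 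Your side remark that the exact dimension formula requires $s>2g-2$ (at $s=(q-1)(q^n-q)-2$ the divisor $(2g-2)P_\infty$ is canonical, so $\ell((2g-2)P_\infty)=g$) is a fair observation about the boundary case that the paper glosses over, but otherwise the two proofs coincide.
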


\subsection{Suzuki and Ree curves}
Let $q_0= 2^s$, where $s\geq1$, and $q= 2q_0^2$. The Suzuki curve $S_q$ is given by the affine model 
$$S_q: Y^q+Y=X^{q_0}(X^q+X).$$
 The curve $S_q$  is $\mathbb{F}_{q^4}$-maximal of genus $q_0(q-1)$. It has a unique singular point, namely its unique point at infinity $P_\infty$,which is a $q_0$-fold point and the center of just one place of $S_q$.
The extension $\mathbb{F}_{q^4}(S_q)/\mathbb{F}_{q^4}(x)$ is a Galois extension of degree $q$ in which $P_\infty$ is the only ramified place, and it is totally ramified. Hence, by Remark \ref{remo1}, $(dx)=(2g-2)P_\infty=(2q_0(q-1)-2)P_\infty$ and $(S_q,P_\infty)$ is a Swiss curve.

Let $q_0= 3^s$, where $s\geq1$, and $q= 3q_0^2$. The Ree curve $R_q$ is given by the affine space model 
$$R_q:\begin{cases} Y^q-Y=X^{q_0}(X^q-X), \\ Z^q-Z=X^{2q_0}(X^q-X)\end{cases}.$$
This curve has genus $\frac{3}{2}q_0(q-1)(q+q_0+1)$ and it is $\mathbb{F}_{q^6}$-maximal. It has a unique singular point coinciding with its unique infinite point; moreover there is a unique place $P_\infty$ centered in it. The extension $\mathbb{F}_{q^6}(R_q)/\mathbb{F}_{q^6}(x)$ is a Galois extension of degree $q^2$ in which $P_\infty$ is the only ramified place, and it is totally ramified. Hence, by Remark \ref{remo1}, $(dx)=(2g-2)P_\infty$ and $(R_q,P_\infty)$ is a Swiss curve.

\begin{remark} Since Suzuki and Ree curves are Swiss curves, it makes sense to ask for a suitable set $I$ of rational points as well as a covering of $I$ made of lines, to which Theorem \ref{Main:Swiss} applies. According to the equations defining the curves, the most natural choice would probably be $I={S}_q(\mathbb{F}_q)$ and $I={R}_q(\mathbb{F}_q)$ respectively. Indeed, in both cases a nice covering of lines is given simply by the vertical lines $x=a$, with $a \in \mathbb{F}_q$.
However, in this case one would obtain $f(X)=\prod_{a\in\mathbb{F}_q}(X-a)=X^q-X$, which has clearly constant derivative. Hence the construction would be the same as in \cite{MTT}. The determination of a suitable set $I$ and a covering of lines remains an open problem.
\end{remark}

\section{$r$-Swiss curves and codes}
\label{sec:r-swiss}

In this section we generalize the construction of Section \ref{sec:swiss} to a larger class of curves.
\begin{definition}
Let $r$ be a positive integer. An $r$-Swiss curve is an $(r+1)$-tuple $(\mathcal{C},P_1,\ldots,P_r)$ such that $\mathcal{C}$ is an absolutely irreducible $\mathbb{F}_q$-rational curve, $P_1,\ldots,P_r$ are distinct places of $\mathbb{F}_q(\mathcal{C})$, and the following properties hold:
\begin{enumerate}
\item $P_i$ is rational for every $i=1,\ldots,r$;
\item there exists a function $x\in \mathbb{F}_q(\mathcal{C})$ such that $(dx)=\frac{2g-2}{r}(P_1+\ldots+P_r)$;
\item ${\rm supp}\left((x)_\infty\right)=\{P_1,\ldots,P_r\}$.
\end{enumerate}
\end{definition}

\begin{remark}
Clearly, a $1$-Swiss curve is just a Swiss curve.
\end{remark}
Consider an $r$-Swiss curve $(\mathcal{C},P_1,\ldots,P_r)$ and the set
$$\mathcal{A}=\left\{\alpha \in  \mathbb{F}_q \ :\ (x-\alpha)_0- \sum_{i=1}^r v_{P_i}((x-\alpha)_0) P_i\leq \sum_{Q\in \mathbb{P}_q\setminus  \{P_1,\ldots,P_r\}} Q \right\}.$$
The set $\mathcal{A}$ consists of all elements $\alpha\in \mathbb{F}_q$ such that all the zeros of the function $x-\alpha$ other than (possibly) $P_1,\ldots,P_r$ are rational and simple.
Also, let 
$$D=\sum_{\alpha \in \mathcal{A}}\left(  (x-\alpha)_0- \sum_{i=1}^r v_{P_i}((x-\alpha)_0) P_i \right).$$

\begin{theorem}\label{Main:WSwiss}
Let $(\mathcal{C},P_1,\ldots, P_r)$ be an $r$-Swiss curve. With the same notation as above, consider   another $\mathbb{F}_q$-rational divisor $G$ such that $\rm{supp}(G)\cap \rm{supp}(D) =\emptyset$. Then
\begin{enumerate}
    \item $C(D,G)^{\bot} =C(D,E+\sum_{i=1}^r(\gamma_i+2g-2) P_i-G)$, for some positive divisor $E$ and some integers $\gamma_1,\ldots,\gamma_r$;
    \item if, in addition, $2G\leq E+\sum_{i=1}^r\left(\gamma_i+\frac{2g-2}{r}\right) P_i $ then $C(D,G)\subset C(D,G)^{\bot}$.
\end{enumerate}
\end{theorem}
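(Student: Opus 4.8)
The plan is to run the proof of Theorem~\ref{Main:Swiss} almost verbatim, the only difference being that the single pole $P$ is replaced by the $r$ poles $P_1,\dots,P_r$ of $x$ and that the canonical divisor $(dx)$ is now distributed equally among them. First I would set
$$h=\sum_{\alpha\in\mathcal{A}}\frac{1}{x-\alpha},\qquad \omega=h\,dx.$$
The defining property of $\mathcal{A}$ says that, for every $\alpha\in\mathcal{A}$, all zeros of $x-\alpha$ other than (possibly) $P_1,\dots,P_r$ are rational and simple; since distinct $\alpha$'s have disjoint zero sets, each such place is a simple pole of $h$, and collected together these places form exactly $D$. By Condition~$(3)$ the poles of $x$ are precisely $P_1,\dots,P_r$, so $v_{P_i}(x-\alpha)=v_{P_i}(x)<0$ and hence $v_{P_i}\!\big(\tfrac{1}{x-\alpha}\big)>0$ for every $\alpha$ and every $i$; therefore $h$ has no pole outside ${\rm supp}(D)$, and $\gamma_i:=v_{P_i}(h)>0$. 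Setting $E:=(h)_0-\sum_{i=1}^r v_{P_i}\big((h)_0\big)P_i$, which is effective, we get $(h)=E-D+\sum_{i=1}^r\gamma_i P_i$.

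Next, using Condition~$(2)$ in the form $(dx)=\tfrac{2g-2}{r}(P_1+\dots+P_r)$,
$$(\omega)=(h)+(dx)=E-D+\sum_{i=1}^r\Big(\gamma_i+\tfrac{2g-2}{r}\Big)P_i,$$
so $\omega$ has a simple pole at every place of $D$ and no pole on ${\rm supp}(D)$ coming from anything else. A local computation at a simple zero $Q$ of $x-\alpha$ (pick a local parameter $t$, write $x-\alpha=ct+O(t^{2})$ with $c\ne 0$, so that $\tfrac{dx}{x-\alpha}=\tfrac{1}{t}\big(1+O(t)\big)\,dt$) shows ${\rm res}_Q(\omega)=1$ for every $Q\in{\rm supp}(D)$. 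Then \cite[Theorem 2.72]{27} applies to the Weil differential $\omega$ and gives
$$C(D,G)^{\bot}=C\big(D,\,D+(\omega)-G\big)=C\Big(D,\,E+\sum_{i=1}^r\big(\gamma_i+\tfrac{2g-2}{r}\big)P_i-G\Big),$$
which is statement~$(1)$ (up to relabelling the integers $\gamma_i$, which is harmless since $r\mid 2g-2$); the code on the right is well defined because ${\rm supp}(G)\cap{\rm supp}(D)=\emptyset$, while the places $P_i$ and those in ${\rm supp}(E)$ are not in ${\rm supp}(D)$ by construction.

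Statement~$(2)$ then follows exactly as in the case $r=1$: the hypothesis $2G\le E+\sum_{i=1}^r(\gamma_i+\tfrac{2g-2}{r})P_i=D+(\omega)$ is equivalent to $G\le (D+(\omega))-G$, whence $\mathcal{L}(G)\subseteq\mathcal{L}\big((D+(\omega))-G\big)$ and therefore $C(D,G)\subseteq C\big(D,(D+(\omega))-G\big)=C(D,G)^{\bot}$.

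I expect the only genuinely new point relative to Theorem~\ref{Main:Swiss} to be the bookkeeping around the several poles in the second paragraph: one must check that distributing the canonical divisor over $P_1,\dots,P_r$ neither introduces poles of $\omega$ on ${\rm supp}(D)$ nor disturbs the normalization ${\rm res}_Q(\omega)=1$ — which is precisely what Conditions~$(2)$ and $(3)$ in the definition of an $r$-Swiss curve are tailored to guarantee. The pole/residue analysis of $h\,dx$ and the self-orthogonality deduction are then word-for-word the single-pole argument.
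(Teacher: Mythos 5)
Your proof is correct and follows essentially the same route as the paper: the same function $h=\sum_{\alpha\in\mathcal{A}}\frac{1}{x-\alpha}$, the same differential $\omega=h\,dx$ with divisor $E-D+\sum_{i=1}^r\bigl(\gamma_i+\frac{2g-2}{r}\bigr)P_i$ and residue $1$ at each place of $D$, and the same appeal to the duality result of H\o holdt--van Lint--Pellikaan. You merely spell out details the paper leaves implicit (the absence of poles of $h$ at the $P_i$ via Condition (3), the local residue computation, and the final inclusion argument), and your remark about absorbing the constant $\frac{2g-2}{r}$ versus $2g-2$ into the unspecified integers $\gamma_i$ correctly accounts for the paper's statement.
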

\begin{proof}
Let $h=\sum_{a \in \mathcal{A}} \frac{1}{x-a}$. Clearly, places in ${\rm supp}(D) \setminus \{P_1,\ldots, P_r\}$ are simple poles of $h$. 
Consider 
$$\omega=\left(h \right) dx.$$
By hypothesis $(dx)=\sum_{i=1}^r\frac{2g-2}{r}P_i$, and 
$$\left(h\right)=E-D+ \sum_{i=1}^r\gamma_i P_i,$$
where $E=\left(h\right)_0-\sum_{i=1}^r v_{P_i}((h)_0)P_i$ and $\sum_{i=1}^r \gamma_i =\deg D-\deg E$. Summing up,
$$(\omega)=Z-D+\sum_{i=1}^r\bigg(\gamma_i+\frac{2g-2}{r}\bigg) P_i.$$
Therefore $\omega$ has poles at places of $D$ and it is readily seen that the residue of $\omega$ at such places is $1$.
Now the claim follows from \cite[Theorem 2.72]{27}.
\end{proof}

\section{Applications to some $r$-Swiss curves}\label{sec:r-appl}

\subsection{GGK curves}\label{SubSection:GGK}
Let $q$ be a prime power and $n\geq3$ be an odd integer. 
The  curve $GGK2(q,n)$ is defined by the equations
\begin{equation}\label{GGK_equation}
GGK2(q,n): \left\{
 \begin{array}{l}
X^{q+1} -1 = Y^{q+1}\\
Y\bigg(\frac{X^{q^2}-X}{X^{q+1}-1}\bigg)= Z^m\\
\end{array}
\right.
,
\end{equation}
where $m= (q^n+1)/(q+1)$; see \cite{BM}.
The genus of $GGK2(q,n)$ is $\frac{1}{2}(q-1)(q^{n+1}+q^n-q^2)$, $GGK2(q,n)$ is $\mathbb F_{q^{2n}}$-maximal, and $GGK2(q,3)\cong GK_q$. 
The coordinate function $x$ has exactly $q+1$ distinct poles $P_1,\ldots,P_{q+1}$; also, the coordinate function $z$ has pole divisor $(z)_\infty=(q^2-q)(P_1+\ldots+P_{q+1})$, and $(dz)=\frac{2g-2}{q+1}(P_1+\ldots+P_{q+1})$ (see \cite[Page 17]{BM}). Hence, $(GGK2(q,n),P_1,\ldots,P_{q+1})$ is a $(q+1)$-Swiss curve.

\subsubsection{\bf The case $q=2$.}
In the rest of this section, $q=2$ and $GGK2(q,n)$ reads
\begin{equation*}
GGK2(2,n): \left\{
 \begin{array}{l}
X^{3} -1 = Y^{3}\\
YX= Z^{(2^n+1)/3}\\
\end{array}
\right..
\end{equation*}
It is easily seen that $z=a$ has exactly $q^3-q=6$ rational zeros if and only if either $a=0$ or $Y^6+Y^3-a^{2^n+1} \in \mathbb{F}_{2^{2n}}[Y]$ has $6$ distinct roots in $\mathbb{F}_{2^{2n}}$. 
From the maximality of $GGK2(2,n)$ and $[\mathbb{F}_{2^{2n}}(x,y,z) : \mathbb{F}_{2^{2n}}(z)]=6$ follows that the set
 $$\mathcal{A}=\{a \in \mathbb{F}_{2^{2n}}^* \mid Y^6+Y^3=a^{2^n+1} \ \textrm{has 6 distinct roots in} \ \mathbb{F}_{2^{2n}}\}$$
has size
$$|\mathcal{A}|=4(2^n+1)(2^{n-1}-1)/3.$$
Let $f(X)=\prod_{a \in \mathcal{A}}(x-a)$.
The following can be checked by direct computation with MAGMA. 
\begin{itemize}
\item $n=3$. In this case,
$$f(X)=X^{36} + X^{27} + X^{18} + 1, \qquad f^\prime (X)=x^{26}.$$
Since $P_1,P_2,P_3$ are not zeros of $f^\prime$, we have $(f^\prime(z))_\infty=26(q^2-q)(P_1+P_2+P_3)$ and hence
$$\deg(f^{\prime}(z))_0 =26(q+1)(q^2-q)=156.$$
Now, the function 
$$ \sum_{\xi \in \mathcal{A}}\frac{1}{z-\xi}=\frac{f^{\prime}(z)}{f(z)}$$
has principal divisor 
$$M-D+20 \sum_{i=1}^{3}P_i,$$
where $M$ is the zero divisor of $f^{\prime}(z)$  and 
$$D=(f(z))_0= \sum_{P \in \mathbb{P}_{2^{6}}(GGK2(2,3))\setminus \mathbb{P}_{2^{2}}(GGK2(2,3)) }P$$ has degree $216$.
The principal divisor of 
$$\omega=\sum_{\xi \in \mathcal{A}}\frac{1}{z-\xi}dz$$
is 
$$M-D+26 \sum_{i=1}^{3}P_i.$$
Consider the multi-point divisor $G=s \sum_{i=1}^{3} P_i$.
By Theorem \ref{Main:Swiss} and its proof,
$$C(D,G)^{\bot}=C\left(D,M+(26-m) \sum_{i=1}^{3}P_i \right).$$
Also, $C(D,G) \subset C(D,G)^{\bot}$ if $s \leq 13$.
Now we apply Theorem \ref{th:stab}.
\begin{theorem}\label{Th:GGK21Quantum_3}
With the same notation as above, consider the $2^6$-ary code $C(D,s\sum_{i=1}^{3}P_i )$ from the curve $GGK2(2,6)$.
Assume that $6\leq m \leq 13$.
Then there exists a quantum code with parameters
$$[[\,216,\;k_m,\;3m-18\,]]_{2^{6}},\qquad k_m=\begin{cases} 196, \ if \ m=6, \\ 192-6(m-7), \ if \ 7\leq m\leq 13. \end{cases}$$
\end{theorem}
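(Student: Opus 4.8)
The plan is to instantiate Theorem \ref{Main:WSwiss} with the specific data coming from $GGK2(2,3)=GK_2$ and then feed the resulting self-orthogonal code into Theorem \ref{th:stab}. First I would record the combinatorial/arithmetic facts about the curve that were established just above: the genus is $g=\frac12(q-1)(q^{n+1}+q^n-q^2)$, which for $q=2$, $n=3$ gives $g=9$, so $2g-2=16$; the curve is a $(q+1)$-Swiss curve with $q+1=3$ distinguished places $P_1,P_2,P_3$, all rational, with $(dz)=\frac{2g-2}{q+1}(P_1+P_2+P_3)=\frac{16}{3}(P_1+P_2+P_3)$ — wait, one checks this is consistent because the exponent $\frac{2g-2}{r}$ must be an integer; here $16/3$ is not an integer, so one should double-check: for $GGK2(2,3)$ the genus is actually that of $GK_2$, namely $g=\frac{(q^3+1)(q^2-2)}{2}+1$ with $q=2$, i.e. $g=\frac{9\cdot 2}{2}+1=10$, so $2g-2=18$ and $\frac{2g-2}{q+1}=6$, matching the ``principal divisor of $\omega$ is $M-D+26\sum P_i$'' computation above. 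I would open the proof by fixing these numbers.

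Next I would transcribe the three ingredients already computed in the preceding paragraphs: (i) $D=\sum_{P\in\mathbb{P}_{2^6}(GGK2(2,3))\setminus\mathbb{P}_{2^2}(GGK2(2,3))}P$ has degree $N:=216$; (ii) with $G=s\sum_{i=1}^3 P_i$ and $M=(f'(z))_0$ of degree $156$, Theorem \ref{Main:WSwiss}(1) gives $C(D,G)^\bot=C(D,M+(26-s)\sum_{i=1}^3 P_i)$, and Theorem \ref{Main:WSwiss}(2) gives $C(D,G)\subseteq C(D,G)^\bot$ exactly when $2s\le 26$, i.e. $s\le 13$; and (iii) the lower hypothesis $s\ge 6$ guarantees $\deg G=3s\ge 18=2g-2$, which together with $\deg G<N$ puts us in the range where $\ell(G)=\deg G+1-g=3s-9$, so that the code $C(D,G)$ has dimension exactly $k=3s-9$. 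Then $C(D,G)^\bot$ has dimension $N-k$ and, by Theorem \ref{th:stab} applied to the self-orthogonal code $C=C(D,G)$, there is an $[[N,N-2k,\ge d^\bot]]_{2^6}$ stabilizer quantum code, where $d^\bot$ is the minimum distance of $C(D,G)^\bot$.

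The remaining point is to pin down $N-2k$ and a bound on $d^\bot$. For the length parameter, $N-2k=216-2(3s-9)=234-6s$; I would then massage this into the piecewise form in the statement by writing $m$ for $s$ and checking that $234-6s=196$ when $s=6$ and $234-6s=192-6(s-7)$ when $7\le s\le 13$ — a one-line verification. For the distance, $C(D,G)^\bot=C(D,M+(26-s)\sum_{i}P_i)$ is an AG code with designed distance $d^\bot\ge N-\deg\big(M+(26-s)\sum_i P_i\big)=216-156-3(26-s)=3s-18$; this gives the claimed $\ge 3m-18$. I would note that the hypothesis $s\ge 6$ also ensures $3s-18\ge 0$ and in fact $\ge 0$ strictly for $s\ge 7$, and that one should check $\deg(M+(26-s)\sum_i P_i)<N$ so the dual code's parameters behave as stated; since $156+3(26-s)=234-3s\le 234-18=216=N$ with equality only forced away by $s>6$, a small remark handles the boundary. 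The main obstacle — and the only genuinely non-formal step — is confirming that the MAGMA-verified identities $f(X)=X^{36}+X^{27}+X^{18}+1$ and $f'(X)=X^{26}$ are correct and that $P_1,P_2,P_3\notin\mathrm{supp}((f'(z))_0)$, since everything numerical downstream ($\deg M=156$, the coefficient $20$ and then $26$ at $\sum_i P_i$, hence $N=216$) rests on these; granting those, the proof is a direct assembly of Theorem \ref{Main:WSwiss} and Theorem \ref{th:stab} with the dimension formula $\ell(G)=\deg G+1-g$.
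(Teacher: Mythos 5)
Your overall strategy is exactly the paper's (the paper has no separate proof; the theorem is the assembly of the MAGMA-verified divisor data with Theorem \ref{Main:WSwiss} and Theorem \ref{th:stab}), and for $7\le m\le 13$ your argument is correct: $\deg G=3s>2g-2=18$, so $\ell(G)=3s-9$, the quantum dimension is $216-2(3s-9)=234-6s=192-6(s-7)$, self-orthogonality holds for $s\le 13$, and the distance bound $3s-18$ comes from the designed distance of the dual. (Your opening worry about the genus is only your own arithmetic slip: $\tfrac12(q-1)(q^{n+1}+q^n-q^2)=10$ for $q=2$, $n=3$, consistent with the GK formula, so $2g-2=18$ and $\tfrac{2g-2}{3}=6$ as you eventually use.)

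There is, however, a genuine gap at the boundary case $m=6$, and your attempt to paper over it contains a false identity. The equality $\ell(G)=\deg G+1-g$ requires $\deg G>2g-2$, not $\deg G\ge 2g-2$; at $s=6$ one has $\deg G=18=2g-2$, and in fact $G=6(P_1+P_2+P_3)=(dz)$ is a canonical divisor, so $\ell(G)=g=10$ rather than $9$. Consequently the quantum dimension is $N-2\ell(G)=216-20=196$, which is precisely why the theorem states $k_6=196$ separately; your formula $234-6s$ would give $198$ at $s=6$, and your claimed ``one-line verification'' that $234-6s=196$ when $s=6$ is simply wrong ($234-36=198\neq196$). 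So as written your proof does not establish the $m=6$ entry: you must add the observation that $6\sum_{i=1}^3 P_i$ is canonical (it equals $(dz)$, whose divisor was computed to be $\tfrac{2g-2}{q+1}\sum_i P_i$) and hence $\ell(G)=g$ in that case. A related loose end you flag but do not resolve is the same boundary in the dual: at $s=6$, $\deg\bigl(M+(26-s)\sum_i P_i\bigr)=234-3s=216=N$, so the generic parameter statements for $C(D,G)^{\bot}$ need the separate (trivial, since the distance bound is $3\cdot6-18=0$) treatment there as well.
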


\item $n=5$. Here,
\begin{align*}
f(X)={}&X^{660} + X^{627} + X^{594} + X^{528} + X^{495} + X^{396} + X^{363} + X^{330} + X^{132} + X^{66} + 1,
\end{align*}
and 
$$
f^\prime (X)=X^{626} + X^{494} + X^{362}.
$$
Since $P_1,P_2,P_3$ are not zeros of $f^\prime$ we have $(f^\prime(z))_\infty=626(q^2-q)\sum_{i=1}^{3} P_i$ and hence
$$\deg(f^{\prime}(z))_0 =626(q+1)(q^2-q)=3756.$$
Now, the function 
$$ \sum_{\xi \in \Xi}\frac{1}{z-\xi}=\frac{f^{\prime}(z)}{f(z)}.$$
has principal divisor 
$$M-D+68 \sum_{i=1}^{3}P_i,$$
where $M$ is the zero divisor of $f^{\prime}(z)$  and 
$$D=(f(z))_0= \sum_{P \in \mathbb{P}_{2^{10}}(GGK2(2,5))\setminus \mathbb{P}_{2^{2}}(GGK2(2,5)) }P$$ has degree $3960$.
The principal divisor of 
$$\omega=\sum_{\xi \in \mathcal{A}}\frac{1}{z-\xi}dz$$
is 
$$M-D+98 \sum_{i=1}^{3}P_i.$$
Consider the multi-point divisor $G=s \sum_{i=1}^{3} P_i$.
By Theorem \ref{Main:Swiss} and its proof,
$$C(D,G)^{\bot}=C\left(D,M+(98-m) \sum_{i=1}^{3}P_i \right).$$
Also, $C(D,G) \subset C(D,G)^{\bot}$ if $m \leq 49$.
Now we apply Theorem \ref{th:stab}.
\begin{theorem}\label{Th:GGK21Quantum_5}
With the same notation as above, consider the $2^{10}$-ary code $C(D,m\sum_{i=1}^{3}P_i )$ from the curve $GGK2(2,10)$.
Assume that $30\leq m \leq 49$.
Then there exists a quantum code with parameters
$$[[3960,k_m, 3m-90]]_{2^{6}},\qquad k_m=\begin{cases} 3868, \ if \ m=30, \\ 3864-6(m-31), \ if \ 31\leq m\leq 49. \end{cases} $$
\end{theorem}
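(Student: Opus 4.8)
The plan is to reproduce the argument used for Theorem~\ref{Th:GGK21Quantum_3}, now with $n=5$ in place of $n=3$, applying Theorem~\ref{Main:WSwiss} with $r=q+1=3$ to the $(q+1)$-Swiss curve $GGK2(2,5)$ over $\mathbb{F}_{2^{10}}$. Its genus is $g=\frac{1}{2}(q-1)(q^{n+1}+q^n-q^2)=46$, so $2g-2=90$ and $\frac{2g-2}{r}=30$. All the curve-theoretic data needed have been assembled above: the divisor $D$ has degree $3960$; the zero divisor $M=(f'(z))_0$ has degree $3756$, which can also be read off from the principal divisor $M-D+68\sum_{i=1}^{3}P_i$ of $f'(z)/f(z)$; and the differential $\omega=\big(\sum_{\xi\in\mathcal{A}}\frac{1}{z-\xi}\big)\,dz$ has divisor $M-D+98\sum_{i=1}^{3}P_i$, with a simple pole of residue $1$ at every place of ${\rm supp}(D)$. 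The only ingredients that are not pure bookkeeping are the computations with MAGMA recorded above --- the explicit shapes of $f$ and $f'$, the value $|\mathcal{A}|=4(2^n+1)(2^{n-1}-1)/3$, and the fact that $P_1,P_2,P_3$ are not zeros of $f'$, whence $\deg(f'(z))_0=626(q+1)(q^2-q)=3756$ --- together with the identity $(dz)=30\sum_{i=1}^{3}P_i$ taken from \cite[Page 17]{BM}.

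Next I take $G=m\sum_{i=1}^{3}P_i$; since ${\rm supp}(G)\cap{\rm supp}(D)=\emptyset$, Theorem~\ref{Main:WSwiss}(1) and its proof give
$$C(D,G)^{\bot}=C\!\left(D,\;M+(98-m)\sum_{i=1}^{3}P_i\right),$$
the defining divisor being $(\omega)+D-G$. The sufficient condition of Theorem~\ref{Main:WSwiss}(2) reads $2G\le M+98\sum_{i=1}^{3}P_i$, and since $M\ge 0$ this holds as soon as $2m\le 98$, i.e.\ $m\le 49$; hence $C(D,G)\subseteq C(D,G)^{\bot}$ for every $m$ in the range $30\le m\le 49$.

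It remains to read off the parameters and apply Theorem~\ref{th:stab}. As $\deg G=3m\le 147<3960=N$, the map $e_D$ is injective, so $k:=\dim C(D,G)=\ell(G)$; for $m=30$ the divisor $G=30\sum_{i=1}^{3}P_i$ is the canonical divisor $(dz)$, so $\ell(G)=g=46$, while for $31\le m\le 49$ we have $\deg G=3m>90=2g-2$ and Riemann--Roch gives $\ell(G)=3m+1-g=3m-45$. The dual $C(D,G)^{\bot}=C(D,M+(98-m)\sum_{i=1}^{3}P_i)$ has defining divisor of degree $3756+3(98-m)=4050-3m$, so $d^{\bot}\ge N-(4050-3m)=3m-90$. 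Theorem~\ref{th:stab} then produces an $[[N,\,N-2k,\,\ge d^{\bot}]]_{2^{10}}$ stabilizer quantum code; substituting $N=3960$, $d^{\bot}\ge 3m-90$, and the above values of $k$ yields $k_{30}=3960-92=3868$ and $k_m=3960-2(3m-45)=3864-6(m-31)$ for $31\le m\le 49$, with minimum distance at least $3m-90$, as claimed. The one point to handle with care is the boundary case $m=30$, where $\deg G=2g-2$: there the generic formula $\ell(G)=\deg G+1-g$ does not apply and one must instead use that $G$ is canonical to conclude $\ell(G)=g$; the hypothesis $m\ge 30$ is exactly what makes $\deg G\ge 2g-2$ and keeps the bound $3m-90$ nonnegative.
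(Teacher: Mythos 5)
Your proposal is correct and follows essentially the same route as the paper: the MAGMA data for $f$, $f'$, $M$, $D$ and $(\omega)=M-D+98\sum_{i=1}^{3}P_i$, the dual description and self-orthogonality for $m\le 49$ via the $r$-Swiss theorem, and then Theorem \ref{th:stab}. You in fact supply bookkeeping the paper leaves implicit (the Riemann--Roch dimension count and the observation that $G=30\sum P_i=(dz)$ is canonical, explaining the separate value $k_{30}=3868$), and you correctly invoke Theorem \ref{Main:WSwiss} over $\mathbb{F}_{2^{10}}$ for $GGK2(2,5)$, where the paper's text has minor slips ($2^6$, ``$GGK2(2,10)$'', citing Theorem \ref{Main:Swiss}).
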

\end{itemize}

\section{Comparisons}\label{sec:comp}

\begin{corollary}
The $[[N,k,d]]_{q^6}$-codes constructed in Theorem \ref{Th:GKQuantum} are pure.  If in addition $s\geq 7q^5-14q^3+7q^2+12$, then they do not satisfy Condition \eqref{Dis:GV}.
\end{corollary}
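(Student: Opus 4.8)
The plan is to treat the two assertions in turn, in each case reducing everything to elementary estimates on the parameters of the classical code $C:=C(D,sP_\infty)$ of the GK curve underlying Theorem \ref{Th:GKQuantum}. Recall that $C$ is self-orthogonal, has length $N=q^8-q^6+q^5$, dimension $k_C=\ell(sP_\infty)$, and minimum distance $d_C\geq N-s$; the quantum code of the theorem is produced from $C$ by Theorem \ref{th:stab}, so its dimension $k$ satisfies $N-k=2k_C$. Since $(dz)=(2g-2)P_\infty$ with $2g-2=q^5-2q^3+q^2-2$, and $s\geq 2g-2$ throughout the admissible range, Riemann--Roch gives $k_C\leq s+2-g$, with $k_C=s+1-g$ as soon as $s\geq 2g-1$.

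For purity I would invoke Corollary \ref{Corollary}: it is enough to check $d_C>k_C+1$. From $d_C\geq N-s$ and $k_C+1\leq s+3-g$ this is implied by $2s<N+g-3$, and since $s\leq\frac{1}{2}(q^7-q^6+q^5+q^4-2q^3+q^2-2)$ it suffices to verify the polynomial inequality $q^7-q^6+q^5+q^4-2q^3+q^2-2<N+g-3$. Substituting $N=q^8-q^6+q^5$ and $g=\frac{1}{2}(q^5-2q^3+q^2)$, this reduces to $2q^8-2q^7+q^5-2q^4+2q^3-q^2-2>0$, which holds for every prime power $q$ because the leading term $2q^7(q-1)$ dominates. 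Corollary \ref{Corollary} then yields that the code is pure.

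For the second assertion, assume in addition $s\geq 7q^5-14q^3+7q^2+12=7\cdot 2g+12$ (where $2g=q^5-2q^3+q^2$), set $Q:=q^6$, and let $\delta:=s-q^5+2q^3-q^2+2=s-2g+2$ be the minimum distance displayed in Theorem \ref{Th:GKQuantum}; since enlarging $d$ only enlarges the right-hand side of \eqref{Dis:GV}, it suffices to falsify \eqref{Dis:GV} for $d=\delta$. A short computation gives $N-k+2=2(\delta+g)$, so the left-hand side of \eqref{Dis:GV} equals $\sum_{j=0}^{\delta+g-1}Q^{2j}<2Q^{2(\delta+g-1)}$. For the right-hand side I would keep only the last summand $(Q^2-1)^{\delta-2}\binom{N}{\delta-1}$; as $\delta-2<q^7/2\ll Q^2$, Bernoulli's inequality gives $(Q^2-1)^{\delta-2}\geq\frac{1}{2}Q^{2(\delta-2)}$, so it is enough to prove
\[\binom{N}{\delta-1}\geq 4Q^{2g+2}=4q^{12g+12}.\]
The hypothesis on $s$ forces $\delta-1\geq 12g+13$, while $\delta-1<q^7/2<N/2$; hence, by monotonicity of binomial coefficients, $\binom{N}{\delta-1}\geq\binom{N}{12g+12}\geq\bigl(N/(12g+12)\bigr)^{12g+12}$, and the displayed bound follows once $N/(12g+12)\geq 4^{1/(12g+12)}q$. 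Using $12g+12\leq 7q^5$ and $N\geq q^8-q^6$, this last inequality reduces to $q^2-1\geq 7\cdot 4^{1/(12g+12)}$, which holds for every prime power $q\geq 4$; and for $q\in\{2,3\}$ the hypothesis $s\geq 7q^5-14q^3+7q^2+12$ is incompatible with the bound $s\leq\frac{1}{2}(q^7-q^6+q^5+q^4-2q^3+q^2-2)$, so the assertion is vacuous there.

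The only delicate point is the estimate $\binom{N}{\delta-1}\geq 4q^{12g+12}$: a naive comparison of the dominant $q$-powers of the two sides of \eqref{Dis:GV} fails, because $12(d-2)$ is smaller than $6(N-k)$ by exactly $12g+12$, so one genuinely needs the combinatorial factor $\binom{N}{\delta-1}$ --- not merely the powers of $Q^2-1$ --- to bridge a gap of order $q^{12g}$. The choice of which single term of the sum to retain, together with the threshold $\delta-1\geq 12g+13$ (equivalently $s\geq 7q^5-14q^3+7q^2+12$), are precisely what make the estimate close; the boundary case $q=4$ leaves little slack and should be double-checked numerically, everything else being routine polynomial bookkeeping.
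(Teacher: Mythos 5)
Your proof is correct and follows essentially the same route as the paper: purity via Corollary \ref{Corollary} by checking $d_1>k_1+1$ for the underlying self-orthogonal code, and failure of Condition \eqref{Dis:GV} by bounding the left-hand side by a geometric-series estimate and the right-hand side from below by its single top term $(q^{12}-1)^{d-2}\binom{N}{d-1}$, with the hypothesis $s\geq 7q^5-14q^3+7q^2+12$ entering as exactly the needed exponent comparison. The only differences are cosmetic: you lower-bound the binomial coefficient via $\binom{N}{\delta-1}\geq\binom{N}{12g+12}\geq (N/(12g+12))^{12g+12}$ (disposing of $q\in\{2,3\}$ as vacuous), whereas the paper uses $(N/(d-1))^{d-2}\geq q^{d-2}$ uniformly in $q$, and you make explicit the reduction to the designed distance and the verification of $d_1>k_1+1$ that the paper leaves as ``readily seen.''
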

\begin{proof}
Firstly, note that all the $[[N,k,d]]_{q^6}$-codes of Theorem \ref{Th:GKQuantum} satisfy $N\equiv k \pmod 2$. 

Also, the codes are pure. In fact, $C(D,G)$ is an $[N_1,k_1,d_1]_q$ code, with $N_1=q^8-q^6+q^5$, $k_1=s-\frac{(q^3+1)(q^2-2)}{2}$, $d_1\geq q^8-q^6+q^5-s$. It is readily seen that $d_1>k_1+1$ and by Corollary \ref{Corollary} the quantum codes are pure.

As $N-k+2=2s -q^5+2q^3-q^2+4$, the left-hand side of Condition \eqref{Dis:GV} reads
$$\frac{q^{6(N-k+2)}-1}{q^{12}-1}<\frac{q^{6(N-k+2)}}{q^{12}-1}=\frac{q^{6(2s -q^5+2q^3-q^2+4)}}{q^{12}-1},$$
whereas the right-hand side is larger than 
\begin{eqnarray*}
\binom{N}{d-1}(q^{12}-1)^{d-2}&=&\binom{N}{d-1}\frac{(q^{12}-1)^{d-1}}{q^{12}-1} >\left(\frac{N}{d-1}\right)^{d-1}\frac{(q^{12}-1)^{d-1}}{q^{12}-1} \\
&>&\left(\frac{N}{d-1}\right)^{d-2}\frac{q^{12(d-1)}}{q^{12}-1}\geq \frac{q^{12(s-q^5+2q^3-q^2+1)}}{q^{12}-1}q^{d-2},
\end{eqnarray*}
where we used that $N/(d-1)\geq q$, which is implied by $s\leq q^7+q^4-2q^3+q^2-1$ and hence by the hypothesis $s\leq \frac{q^7-q^6+q^5+q^4-2q^3+q^2-2}{2}$.

From $s\geq 7q^5-14q^3+7q^2+12$ follows $d-2+12(s-q^5+2q^3-q^2+1)\geq 6(2s -q^5+2q^3-q^2+4)$; therefore, the left-hand side is smaller than the right-hand side and Condition \eqref{Dis:GV} is not satisfied.
\end{proof}

\section{Acknowledgments*}
The research of D. Bartoli and G. Zini was partially supported  by the Italian National Group for Algebraic and Geometric Structures and their Applications (GNSAGA - INdAM).

\end{document}